\newcommand{\R}{\mathbb{R}}
\newcommand{\N}{\mathbb{N}}
\newcommand{\id}{\operatorname{id}}
\newcommand{\ds}{\displaystyle}
\newcommand{\cl}{\overline}
\newcommand{\inter}[1]{{#1}^\circ}
\newcommand{\orbit}{\mathcal{O}}
\newcommand{\inner}[1]{\left\langle #1 \right\rangle}
\newcommand{\G}{\mathcal{G}}
\newcommand{\bs}{\setminus}
\theoremstyle{plain}
\newtheorem{theorem}{Theorem}[section]
\newtheorem{lemma}[theorem]{Lemma}
\newtheorem{corollary}[theorem]{Corollary}
\theoremstyle{remark}
\newtheorem{remark}[theorem]{Remark}
\theoremstyle{definition}
\newtheorem{definition}[theorem]{Definition}
\newtheorem{example}[theorem]{Example}
\numberwithin{equation}{section}
\begin{document}
\title[Convergence in nonlinear Perron-Frobenius theory]{Convergence of iterates in nonlinear Perron-Frobenius theory}
\author[B. Lins]{Brian Lins}
\date{}
\address{Brian Lins, Hampden-Sydney College}
\email{blins@hsc.edu}
\subjclass[2010]{Primary 47H07, 47H09, 47J26; Secondary 46T20, 47H08}
\keywords{Nonlinear Perron-Frobenius theory, type K order-preserving, subhomogeneous functions, nonexpansive maps, fixed points, Thompson's metric, R-linear convergence, piecewise affine functions, nonnegative tensors}

\begin{abstract}
Let $C$ be a closed cone with nonempty interior $\inter{C}$ in a Banach space. Let $f:\inter{C} \rightarrow \inter{C}$ be an order-preserving subhomogeneous function with a fixed point in $\inter{C}$. We introduce a condition which guarantees that the iterates $f^k(x)$ converge to a fixed point for all $x \in \inter{C}$. This condition generalizes the notion of type K order-preserving for maps on $\R^n_{>0}$. We also prove that when iterates converge to a fixed point, the rate of convergence is always R-linear in two special cases: for piecewise affine maps and also for order-preserving, homogeneous, analytic, multiplicatively convex functions on $\R^n_{>0}$.  This later category includes the maps associated with the homogeneous eigenvalue problem for nonnegative tensors. 
\end{abstract}

\maketitle
 
\section{Introduction}

Let $X$ be a real Banach space with norm $\| \cdot \|$.  A \emph{closed cone} is a closed convex set $C \subset X$ such that (i) $\lambda C \subset C$ for all $\lambda \ge 0$ and (ii) $C \cap (-C) = \{0\}$. A closed cone $C$ induces the following partial order on $X$.  We say that $x \le_C y$ whenever $y - x \in C$. When the cone is understood, we will write $\le$ instead of $\le_C$. We will also write $x \ll y$ when $y-x$ is in the interior of $C$, which will be denoted $\inter{C}$. Let $D \subseteq X$ be a domain. A function $f:D \rightarrow X$ is \emph{order-preserving} if $f(x) \le f(y)$ whenever $x \le y$ and it is \emph{strictly order-preserving} if $f(x) \ll f(y)$ when $x \le y$. We will say that $f$ is \emph{homogeneous} if $f(tx) = tf(x)$ for all $t > 0$ and $x \in D$ and \emph{subhomogeneous} if $f(tx) \le tf(x)$ for all $t \ge 1$ and $x \in D$. 

Suppose that $C$ is closed cone with nonempty interior $\inter{C}$, and $f: \inter{C} \rightarrow \inter{C}$ is order-preserving and (sub)homogeneous. 
Whether or not an eigenvector (or fixed point) exists in $\inter{C}$ is a delicate question, even for finite dimensional cones. A good general reference for finite dimensional cones is \cite{LemmensNussbaum} and \cite[Section 3]{Nussbaum07} has results for infinite dimensional cones. See also \cite{Lins22} and the references therein for some recent results on the existence and uniqueness of eigenvectors in the cone $\R^n_{>0}$.  For maps that do have eigenvectors in the interior of the cone, we can ask whether the iterates $f^k(x)$ (suitably normalized) converge to an eigenvector.


Jiang \cite{Jiang96}, motivated by a theorem of Kamke, introduced the following definition.  An order-preserving function $f: \R^n_{\ge 0} \rightarrow \R^n_{\ge 0}$ is \emph{type K order-preserving} if $f(x)_i < f(y)_i$ whenever $x \le y$ and $x_i < y_i$. If a type K order-preserving subhomogeneous function $f:\R^n_{\ge 0} \rightarrow \R^n_{\ge 0}$ has a fixed point in $\R^n_{>0}$, then Jiang proved that $f^k(x)$ converges to a fixed point for all $x \in \R^n_{\ge 0}$ \cite[Theorem 2.3]{Jiang96}.  
In section 3, we extend the definition of type K order-preserving to apply to maps on any closed cone in a Banach space.  We prove in Theorem \ref{thm:main} that if $f: \inter{C} \rightarrow \inter{C}$ is type K order-preserving, subhomogeneous, and has a fixed point in $\inter{C}$, then under a relatively mild compactness assumption, the iterates $f^k(x)$ converge to a fixed point in $\inter{C}$ for all $x \in \inter{C}$.  The compactness condition is always satisfied in finite dimensions. 
Note that type K order-preserving is weaker than strict order-preserving on a cone, and does not imply the uniqueness of a fixed point, or in the homogeneous case, of an eigenvector up to scaling. 

In some applications it is important to know how fast $f^k(x)$ converges to a fixed point. If $f$ has a unique fixed point and the spectral radius of the derivative or semiderivative at the fixed point is less than one, then the rate of convergence is known to be linear \cite{AkGaNu14}. Similar results for homogeneous maps with a unique eigenvector up to scaling are also known. In section 4 we show that in two important special cases, we can guarantee that the iterates converge to fixed points at a linear rate (specifically R-linear convergence), even if the map has more than one fixed point.  In Theorem \ref{thm:piecewise} we prove that if $f$ is a piecewise affine nonexpansive map on a convex subset $M$ of a finite dimensional Banach space $X$ and $f^k(x)$ converges to $u \in M$ for some $x \in M$, then the rate of convergence must be linear. Nonexpansive piecewise affine maps are common in applications of nonlinear Perron-Frobenius theory \cite{AlBoGa21,GaGu98,HeidergottOldservanderWoude,HuCaPe21, Kohlberg80}, so this result is noteworthy. Then in Theorem \ref{thm:anal} we prove that for any order-preserving, homogeneous, multiplicatively convex, and analytic function $f: \R^n_{>0} \rightarrow \R^n_{>0}$ if $f^k(x)/\|f^k(x)\|$ converges to an eigenvector in $\R^n_{>0}$, then the rate of convergence will be linear.  This class of maps includes the functions associated with the homogeneous eigenproblem for nonnegative tensors (see e.g., \cite{ChPeZh08,FrGaHa13,HuHuQi14,HuQi16,Lim05,Lins22,Qi05,ZhQi12,ZhQiWu13}) and also a large family of functions $\mathcal{M}_+$ constructed from generalized means that was introduced by Nussbaum \cite{Nussbaum86}.  Again, Theorem \ref{thm:anal} applies even when the function has more than one linearly independent eigenvector in $\R^n_{>0}$, so it differs from previously known results about linear convergence of power method iterates for nonnegative tensors that have a unique eigenvector up to scaling \cite{FrGaHa13,HuHuQi14,ZhQi12,ZhQiWu13}.


\section{Preliminaries} \label{sec:prelim}

Let $C$ be a closed cone in a real Banach space $X$. For $x, y \in C$, we define 
$$M(x/y) := \inf \{\beta > 0 : x \le_C \beta y \}$$
and 
$$m(x/y) := \sup \{\alpha > 0 : \alpha y \le_C x \}.$$
Note that $M(y/x) = m(x/y)^{-1}$. If $C$ has nonempty interior, then $(x,y) \mapsto M(x/y)$ is continuous on $X \times \inter{C}$ \cite[Lemma 2.2]{LLNW18}.    

Let $X^*$ denote the dual space of $X$.  The \emph{dual cone} of $C$ is 
$$C^* = \{\phi \in X^* : \phi(x) \ge 0 \text{ for all } x \in C \}.$$  
If $C$ has nonempty interior in $X$, then $C^*$ is a closed cone in $X^*$. 
An alternative formula for $M(x/y)$ (see e.g., \cite[Lemma 2.2]{LLNW18}) is:
\begin{equation} \label{functionals}
M(x/y) = \sup_{\phi \in C^*} \frac{\phi(x)}{\phi(y)}.
\end{equation}

We say that $x$ and $y$ are \emph{comparable} and write $x \sim y$ if $M(x/y) < \infty$ and $m(x/y)> 0$.  Comparability is an equivalence relation, and the equivalence classes are called the \emph{parts} of $C$. If $C$ has nonempty interior, then $\inter{C}$ is a part.  
When $x, y \in C$ are comparable, \emph{Thompson's metric} is 
$$d_T(x,y) := \max \{\log M(x/y), \log M(y/x) \}$$
and \emph{Hilbert's projective metric} is 
$$d_H(x,y) := \log \left(\frac{M(x/y)}{m(x/y)}\right).$$
Thompson's metric is a metric on each part of $C$.  Hilbert's projective metric has the following properties for any comparable $x,y, z \in C$. 
\begin{enumerate} 
\item $d_H(x,y) = 0 \text{ if and only if } y = \lambda x \text{ for some } \lambda > 0$.
\item $d_H(\alpha x, \beta y) = d_H(x,y)  \text{ for all } \alpha, \beta > 0$.
\item $d_H(x,y) = d_H(y,x)$.
\item $d_H(x,z) = d_H(x,y)+d_H(y,z)$.
\end{enumerate}
Note that $d_H$ is a metric on the set $\Sigma := \{ x \in \inter{C} : \|x\|=1 \}$. 
For any comparable $x, y \in C$, we have
\begin{equation} \label{HilbertThompson}
d_H(x,y) \le 2 d_T(x,y).
\end{equation}

In finite dimensions, the topologies induced by $d_T$ on $\inter{C}$ and $d_H$ on $\Sigma$ are equivalent to the topologies inherited from the norm.  This is not always true in infinite dimensions.  It is true, however, if the cone $C$ is normal.  A closed cone $C$ in a Banach space $X$ is \emph{normal} if there is a constant $\kappa > 0$ such that 
$$\|x\| \le \kappa \, \|y\| \text{ whenever } 0 \le_C x \le_C y.$$ 

If $C$ is a normal cone with nonempty interior in $X$, and the closed ball $N_R(u) := \{x \in X : \|x-u\| \le R\}$ is contained in $\inter{C}$, then \cite[Proposition 1.3 and Remark 1.4]{Nussbaum88} imply that there is a constant $c > 0$ such that 
\begin{equation} \label{normalThompson}
c^{-1} \|x-u\| \le d_T(x,u) \le c \|x-u\|
\end{equation}
for all $x \in N_R(u)$. 

Let $(M,d)$ be a metric space.  A function $f: M \rightarrow M$ is \emph{nonexpansive} with respect to the metric $d$ if $d(f(x),f(y)) \le d(x,y)$ for all $x, y \in M$. If $C$ has nonempty interior and $f: \inter{C} \rightarrow \inter{C}$ is order-preserving and subhomogeneous, then $f$ is nonexpansive with respect to Thompson's metric \cite[Lemma 2.1.7]{LemmensNussbaum}. If $f$ is order-preserving and homogeneous, then the map $g(x) = f(x)/\|x\|$ is nonexpansive with respect to Hilbert's projective metric on $\Sigma$ \cite[Proposition 1.5]{Nussbaum88}. In that case, $f$ has an eigenvector $u \in \inter{C}$ with $f(u) = \lambda u$ if and only if $g(x)$ has a fixed point. Furthermore, if $f$ has more than one eigenvector $u, v \in \inter{C}$, then the eigenvalues corresponding to $u$ and $v$ must be the same.  This is not necessarily true, however, if $f$ is only subhomogeneous rather than homogeneous.


If $C$ is a normal closed cone with nonempty interior in a Banach space and $f: \inter{C} \rightarrow \inter{C}$ is order-preserving and homogeneous, the \emph{cone spectral radius} of $f$ is 
$$r_C(f) = \limsup_{k \rightarrow \infty} \|f^k(x)\|^{1/k}$$
for some $x \in \inter{C}$. The value of $r_C(f)$ does not depend on the choice of $x$ \cite[Theorem 2.2]{MaNu02}.  Furthermore, since $f(\inter{C}) \subset \inter{C}$, it follows that $r_C(f) > 0$.  If $f$ has an eigenvector in $\inter{C}$, then the corresponding eigenvalue will be $r_C(f)$.  

For any map $f:D \rightarrow D$ on a set $D$, we the \emph{orbit} of a point $x \in D$ under $f$ is the set $\orbit(x,f) := \{f^k(x) : k \in \N \}$.  If $D$ has a topology, then the \emph{omega limit set} of a point $x$ under $f$ is
$$\omega(x,f) := \bigcap_{n \in \N} \overline { \{ f^k(x) : k \ge n \} },$$
where $\overline{A}$ denotes the closure of $A$.

\section{Type K order-preserving maps} \label{sec:typeK}

\begin{definition}
Let $C$ be a closed cone in a Banach space $X$.  Let $D$ be a domain in $X$ and let $f: D \rightarrow X$.  We say that $f$ is \emph{type K order-preserving} if for any $x, y \in D$ with $y \ge x$, there exists $\epsilon > 0$ such that $f(y) - f(x) \ge \epsilon (y-x)$. 
\end{definition}

\begin{theorem} \label{thm:main}
Let $C$ be a closed cone with nonempty interior in a Banach space and let $f:\inter{C} \rightarrow \inter{C}$ be subhomogeneous and type K order-preserving. If $f$ has a fixed point in the interior of $C$ and the closure of the orbit $\mathcal{O}(x,f)$ is compact for some $x \in \inter{C}$, then $f^k(x)$ converges to a fixed point of $f$.  
\end{theorem}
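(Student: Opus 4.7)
The plan is to study the omega-limit set $\omega := \omega(x,f)$ and show that every point in $\omega$ is a fixed point of $f$; convergence of $f^k(x)$ then follows from Thompson-nonexpansiveness. Let $u \in \inter C$ be the given fixed point. Since $f$ is order-preserving and subhomogeneous, it is nonexpansive in Thompson's metric, so $d_T(f^k(x), u)$ is non-increasing with some limit $L \ge 0$; in particular, the orbit lies in the Thompson ball $\{w : d_T(w,u) \le d_T(x,u)\} \subseteq \inter C$. Combined with compactness of $\overline{\orbit(x,f)}$, this gives $\omega$ nonempty, compact, contained in the Thompson sphere of radius $L$ about $u$, and satisfying $f(\omega) \subseteq \omega$ by continuity of $f$.

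First I would prove the standard dynamical lemma that $f$ restricted to $\omega$ is a surjective isometry. For surjectivity, given $v = \lim f^{k_i}(x) \in \omega$, compactness lets me extract a subsequence along which $f^{k_i - 1}(x)$ also converges, say to $w \in \omega$; then $f(w) = v$ by continuity. A continuous surjective nonexpansion of a compact metric space is automatically an isometry, so $f|_\omega$ is isometric.

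The heart of the proof is to use the type K hypothesis to force every $v \in \omega$ to be a fixed point. I would consider $\bar M := \sup_{w \in \omega} M(w/u)$, attained at some $\bar v \in \omega$ by compactness. In the case $\bar M \ge 1$, subhomogeneity yields $f(\bar M u) \le \bar M u$, and type K applied to $\bar v \le \bar M u$ provides $\epsilon > 0$ with
\begin{equation*}
\bar M u - f(\bar v) \ge f(\bar M u) - f(\bar v) \ge \epsilon(\bar M u - \bar v).
\end{equation*}
The surjectivity of $f|_\omega$ produces a pre-orbit of $\bar v$ lying in $\omega$ along which $M(\cdot/u) \equiv \bar M$, and iterating the above inequality along this pre-orbit should drive $\bar M u - \bar v$ to zero, forcing $\bar v = \bar M u$ and $f(\bar v) = \bar v$. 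A dual argument using $\underline m := \inf_{w \in \omega} m(w/u)$ (with $\underline m \le 1$) produces another fixed point $\underline v \in \omega$, and an order-sandwich via type K extends this conclusion to arbitrary $v \in \omega$. Once $\omega$ contains a fixed point $v^*$, the Thompson-monotonicity of $d_T(f^k(x), v^*)$ together with $\liminf d_T(f^k(x), v^*) = 0$ gives $f^k(x) \to v^*$.

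The main obstacle is the type K step in the previous paragraph. The subtle point is that $M(\bar v/u) = \bar M$ places $\bar M u - \bar v$ only on the boundary of $C$, not in $\inter C$, so type K by itself cannot push $M(f(\bar v)/u)$ strictly below $\bar M$; the contradiction must come from combining type K with the surjective-isometric structure on $\omega$, likely through an iteration along a pre-orbit or a more delicate envelope argument that I have only sketched in outline. I would also need to be mindful of topology: in the absence of an explicit normality assumption on $C$, the interplay between norm-compactness of $\overline{\orbit(x,f)}$ and Thompson-closure/continuity on the orbit has to be handled carefully, but the Thompson bound $d_T(f^k(x), u) \le d_T(x, u)$ keeps the orbit uniformly away from $\partial C$, which should suffice.
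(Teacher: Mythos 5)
Your first stage — showing $\omega := \omega(x,f)$ is a nonempty compact subset of $\inter C$ on which $f$ acts as a surjective isometry — is correct and is exactly where the paper starts (it cites \cite[Lemma 3.1.2 and Corollary 3.1.5]{LemmensNussbaum} for the isometry fact you re-derive). The second stage, however, has a genuine gap, and the route you sketch cannot be completed. The intermediate goal you aim for, namely that the point $\bar v \in \omega$ maximizing $M(\cdot/u)$ over $\omega$ must satisfy $\bar v = \bar M u$, is simply false: take $f = \id$ (which is subhomogeneous, type K with $\epsilon=1$, and fixes every interior point) and any $x$ not proportional to $u$; then $\omega = \{x\}$, $\bar v = x$, $\bar M = M(x/u) > 0$, yet $\bar v \neq \bar M u$. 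In this example the chain of inequalities you write, $\bar M u - f(\bar v) \ge \epsilon(\bar M u - \bar v)$, holds with equality for $\epsilon = 1$ and forces nothing. More structurally, your inequality gives only a \emph{lower} bound $\bar M u - \bar v \ge \epsilon_1(\bar M u - \bar v^{-1})$ along the pre-orbit, so iterating produces $\bar M u - \bar v \ge \epsilon_1\cdots\epsilon_k(\bar M u - \bar v^{-k})$, which cannot force $\bar M u - \bar v = 0$; and the type K constants $\epsilon_i$ come with no uniformity, so the product may degenerate. You correctly flag the boundary obstruction yourself, but the more basic problem is that comparing $\omega$ against the single external reference point $u$ loses too much information.

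The paper's argument works \emph{intrinsically} inside $\omega$. It picks a diametral pair $y,z \in \omega$ (maximizing $d_T$) and a functional $\phi \in C^*$ attaining $M(z/y) = \phi(z)/\phi(y) =: b/a$. Crucially, maximality of $d_T(y,z)$ forces $a \le \phi(w) \le b$ for every $w \in \omega$. It then perturbs to $f_\epsilon = f - \epsilon\id$ (Lemma~\ref{lem:feps}, which encapsulates type K: for any given pair one can choose $\epsilon>0$ so that $f_\epsilon$ does not increase their Thompson distance), applies $f_\epsilon$ to the preimages $y^{-1},z^{-1} \in \omega$, and squeezes: the $\phi$-values of $f_\epsilon(y^{-1}),f_\epsilon(z^{-1})$ are pinned to $a(1-\epsilon)$ and $b(1-\epsilon)$, hence $\phi(y^{-1}) = a$ and $\phi(z^{-1}) = b$. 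Iterating gives $\phi(z^{-k}) = b$ for all $k$, and then topological recurrence (some $f^n(y)$ is arbitrarily close to $z$, so by isometry $d_T(y, z^{-n})$ is arbitrarily small) contradicts $\phi(y) = a < b = \phi(z^{-n})$ unless $a = b$, i.e.\ $\omega$ is a singleton. If you want to pursue a proof in your spirit, the lesson is to build the rigidity argument out of a diametral pair and its achieving functional inside $\omega$, rather than out of the quantity $\sup_{w\in\omega} M(w/u)$.
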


The key insight in the proof of Theorem \ref{thm:main} is that if $\omega(x,f)$ is a compact subset of $\inter{C}$ and $f:\inter{C} \rightarrow \inter{C}$ is nonexpansive with respect to Thompson's metric, then $f$ is an invertible Thompson metric isometry on $\omega(x,f)$ \cite[Lemma 3.1.2 and Corollary 3.1.5]{LemmensNussbaum}.  This result can be traced back to work by Freudenthal and Hurewitz \cite{FrHu36}.

Before proving Theorem \ref{thm:main}, we note the following minor lemma.

\begin{lemma} \label{lem:feps}
Let $C$ be a closed cone with nonempty interior in a Banach space and let $f: \inter{C} \rightarrow \inter{C}$ be subhomogeneous and type K order-preserving. For every $\epsilon > 0$, let $f_\epsilon := f - \epsilon \id.$
Then for any $x,y \in \inter{C}$, there exists $\epsilon > 0$ small enough so that 
$$d_T(f_\epsilon(x),f_\epsilon(y)) \le d_T(x,y).$$
\end{lemma}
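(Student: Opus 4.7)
The plan is to set $\gamma := \max\{M(x/y), M(y/x)\} = e^{d_T(x,y)}$ and verify both inequalities $f_\epsilon(x) \le \gamma f_\epsilon(y)$ and $f_\epsilon(y) \le \gamma f_\epsilon(x)$ for sufficiently small $\epsilon > 0$. These together will give $M(f_\epsilon(x)/f_\epsilon(y)) \le \gamma$ and $M(f_\epsilon(y)/f_\epsilon(x)) \le \gamma$, hence $d_T(f_\epsilon(x), f_\epsilon(y)) \le \log \gamma = d_T(x,y)$. Observe that $\gamma \ge 1$, since $M(x/y) \cdot M(y/x) \ge 1$ (from $x \le M(x/y) y \le M(x/y) M(y/x) x$), and that by the definition of $M$ we have $x \le \gamma y$ and $y \le \gamma x$.

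Next I would extract two inequalities from the hypotheses. Since $\gamma y \ge x$ and $\gamma x \ge y$, the type K order-preserving property yields constants $\epsilon_1, \epsilon_2 > 0$ with
$$f(\gamma y) - f(x) \ge \epsilon_1(\gamma y - x) \quad\text{and}\quad f(\gamma x) - f(y) \ge \epsilon_2(\gamma x - y).$$
Because $\gamma \ge 1$, subhomogeneity gives $f(\gamma y) \le \gamma f(y)$ and $f(\gamma x) \le \gamma f(x)$. Combining, we obtain
$$\gamma f(y) - f(x) \ge \epsilon_1(\gamma y - x) \quad\text{and}\quad \gamma f(x) - f(y) \ge \epsilon_2(\gamma x - y).$$

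Now I would choose $\epsilon > 0$ small enough that (i) $\epsilon \le \min\{\epsilon_1, \epsilon_2\}$, and (ii) both $f_\epsilon(x)$ and $f_\epsilon(y)$ lie in $\inter C$. Condition (ii) is easy to arrange: since $f(x), f(y) \in \inter C$, open balls around them sit inside $C$, so $f(x) - \epsilon x$ and $f(y) - \epsilon y$ remain in $\inter C$ for $\epsilon$ small. Then
$$\gamma f_\epsilon(y) - f_\epsilon(x) = [\gamma f(y) - f(x)] - \epsilon(\gamma y - x) \ge (\epsilon_1 - \epsilon)(\gamma y - x) \ge 0,$$
and symmetrically $\gamma f_\epsilon(x) - f_\epsilon(y) \ge 0$, completing the argument. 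This is largely a direct unpacking of definitions, so there is no serious obstacle; the only point to watch is that the type K constants $\epsilon_1, \epsilon_2$ depend on the specific comparisons $(x, \gamma y)$ and $(y, \gamma x)$, but since the lemma only needs a single $\epsilon$ for the fixed pair $x, y$, this pointwise dependence causes no difficulty.
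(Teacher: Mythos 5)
Your proof is correct and follows essentially the same route as the paper's: both set $\gamma=e^{d_T(x,y)}$, apply the type K property to the pairs $(x,\gamma y)$ and $(y,\gamma x)$, and use subhomogeneity to conclude $f_\epsilon(x)\le\gamma f_\epsilon(y)$ and $f_\epsilon(y)\le\gamma f_\epsilon(x)$. The only cosmetic difference is that the paper invokes subhomogeneity of $f_\epsilon$ itself, while you use subhomogeneity of $f$ directly before subtracting $\epsilon\,\id$; these are algebraically equivalent. You also make explicit the (easy but worth noting) requirement that $\epsilon$ be small enough so that $f_\epsilon(x),f_\epsilon(y)\in\inter C$, which the paper leaves implicit at this point.
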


\begin{proof}
Let $\beta = \exp d_T(x,y)$. Then
$$x \le \beta y \text{ and } y \le \beta x.$$
Since $f$ is type K order-preserving, there is an $\epsilon > 0$ small enough so that 
$$ \epsilon(\beta y - x)\le f(\beta y) - f(x) \text{ and } \epsilon(\beta x - y) \le f(\beta x) - f(y).$$
Therefore 
$$f_\epsilon(x) \le f_\epsilon(\beta y)  \text{ and } f_\epsilon(y) \le f_\epsilon(\beta x).$$
Note that $f_\epsilon$ is also subhomogeneous, therefore
$$f_\epsilon(x) \le \beta f_\epsilon(y)  \text{ and } f_\epsilon(y) \le \beta f_\epsilon(x).$$
This means that $d_T(f_\epsilon(x),f_\epsilon(y)) \le d_T(x,y).$
\end{proof}

\begin{proof}[Proof of Theorem \ref{thm:main}] Since $f$ has a fixed point in $\inter{C}$, the orbit of $x$ is bounded in Thompson's metric. Therefore, the omega limit set $\omega(x,f)$ is contained in $\inter{C}$. Any subset of $\inter{C}$ which is compact in the norm topology of $\inter{C}$ is also compact in the Thompson metric topology, which may be weaker. So $f$ is an invertible Thompson's metric isometry on $\omega(x,f)$ \cite[Lemma 3.1.2 and Corollary 3.1.5]{LemmensNussbaum}.

Choose $y, z \in \omega(x,f)$ such that $d_T(y,z)$ is maximal.  We may assume without loss of generality that $d_T(y,z) = \log M(z/y)$. Then there is a linear functional $\phi \in C^*$ such that $M(z/y) = {\phi(z)}/{\phi(y)}$ (see \cite[Lemma 2.2]{LLNW18}).  Let $a = \phi(y)$ and $b = \phi(z)$ and note that $b \ge a$ since $0 \le d_T(y,z) = \log(b/a)$. 

Since $f$ is invertible on $\omega(x,f)$, we have $y^{-1} := f^{-1}(y) \in \omega(x,f)$ and $z^{-1} := f^{-1}(z) \in \omega(x,f)$. Since $f$ is an isometry on $\omega(x,f)$,
$$d_T(y^{-1},z^{-1}) = d_T(y,z) = \log\left(\tfrac{b}{a}\right).$$

Let $f_\epsilon = f - \epsilon \id$. For any sufficiently small $\epsilon >0$, $f_\epsilon(y^{-1})$ and $f_\epsilon(z^{-1})$ are both in $\inter{C}$. By Lemma \ref{lem:feps} there is an $\epsilon > 0$ small enough so that 
$$d_T(f_\epsilon(y^{-1}),f_\epsilon(z^{-1})) \le d_T(y^{-1},z^{-1}) = \log\left(\tfrac{b}{a}\right).$$

Observe that $a \le \phi(w) \le b$ for all $ w \in \omega(x,f)$, otherwise $d_T(w,z)$ or $d_T(y,w)$ would be greater than $\log(b/a)$ by \eqref{functionals}, but $\log(b/a)$ is the maximal distance between pairs in $\omega(x,f)$. In particular, $\phi(y^{-1}) \ge a$ and $\phi(z^{-1}) \le b$.   Therefore
\begin{equation} \label{oba}
\phi(f_\epsilon(y^{-1})) = \phi(y) - \epsilon \phi(y^{-1}) \le a(1-\epsilon)
\end{equation}
and
\begin{equation} \label{obb}
\phi(f_\epsilon(z^{-1})) = \phi(z) - \epsilon \phi(z^{-1}) \ge b(1-\epsilon).
\end{equation}
Then by \eqref{functionals}
$$\log\left(\frac{b}{a}\right) \le \log \frac{\phi(f_\epsilon(z^{-1}))}{\phi(f_\epsilon(y^{-1}))} \le d_T(f_\epsilon(y^{-1}),f_\epsilon(z^{-1})) \le \log\left(\frac{b}{a}\right).
$$
We conclude that $\phi(f_\epsilon(y^{-1})) = a(1-\epsilon)$ and $\phi(f_\epsilon(z^{-1})) = b(1-\epsilon)$.  Combined with \eqref{oba} and \eqref{obb}, this means that $\phi(y^{-1}) = a$ and $\phi(z^{-1}) = b$. 

We can repeat this argument to prove that $\phi(z^{-k}) = b$ for all $k \in \N$ where $z^{-k} := f^{-k}(z) \in \omega(x,f)$. However, there is a point $f^m(x) \in \mathcal{O}(x,f)$ that is arbitrarily close to $y$ and an $n \in \N$ such that $f^{m+n}(x)$ is arbitrarily close to $z$.  Then $f^n(y)$ will be arbitrarily close to $z$ by the nonexpansiveness of $f$.  Since $f$ is an isometry on $\omega(x,f)$, we have $d_T(f^n(y),z) = d_T(y,z^{-n})$ arbitrarily small. But since $\phi(y) = a$ and $\phi(z^{-k}) = b$, we have $d_T(y,z^{-k}) \ge \log (b/a)$ which is a contradiction unless $a=b$ and $\omega(x,f)$ is a singleton.  
\end{proof}

\begin{remark}
Unlike \cite[Theorem 2.3]{Jiang96}, we assume that $f$ has a fixed point in $\inter{C}$ in Theorem \ref{thm:main}.
If $f$ does not have a fixed point in $\inter{C}$, then in many circumstances $\omega(x,f)$ will be contained in a convex subset of the boundary of $C$ \cite[Theorem 3.2]{LLNW18}. There is no guarantee, however, that $f$ extends continuously to the boundary of $C$ even in finite dimensions \cite{BuNuSp03}. However, if $C$ is a polyhedral cone, then any order-preserving subhomogeneous map $f: \inter{C} \rightarrow \inter{C}$ does have a continuous extension to all of $C$ that is order-preserving and subhomogeneous \cite[Theorem 5.1.5]{LemmensNussbaum}.  Furthermore, if $C$ is a polyhedral cone and the orbit $\orbit(x,f)$ is bounded in norm, then it is known that the omega limit set $\omega(x,f)$ is a finite set which is a periodic orbit of a single point \cite[Lemma 6.5 and Theorem 6.8]{AGLN06}, even if $\omega(x,f)$ is not contained in $\inter{C}$.  So for polyhedral cones, we can remove the assumption that $f$ has a fixed point in $\inter{C}$ from Theorem \ref{thm:main} as long as $f$ is type K order-preserving on all of $C$. 
\end{remark}

If $C$ is a closed cone in a finite dimensional Banach space, then an order-preserving subhomogeneous map $f:\inter{C} \rightarrow \inter{C}$ has a fixed point in $\inter{C}$ if and only if $\mathcal{O}(x,f)$ is bounded in Thompson's metric for all $x \in \inter{C}$ \cite[Corollary 3.2.5]{LemmensNussbaum}. In that case, the assumption that $\mathcal{O}(x,f)$ has compact closure is automatic.

The conditions of Theorem \ref{thm:main} are more difficult to check in infinite-dimensions.  Note that many important closed cones in infinite dimensional Banach spaces have empty interior.  For example, the cone $L^p([a,b])_{\ge 0}$ of almost everywhere nonnegative functions in $L^p([a,b])$ has empty interior for all $1 \le p < \infty$. However, $L^p([a,b])_{\ge 0}$ is normal since $0 \le f \le g$ implies that $\|f\|_p \le \|g\|_p$. 

Suppose that $C$ is any normal, closed cone in a Banach space $X$. Let $u \in C$ and let $C_u$ be the part of $C$ containing $u$.  For any $x, y \in X$, let 
$$[x,y] := \{ z \in X : x \le z \le y\}.$$ 
Let 
$$X_u := \bigcup_{k > 0} [-ku,ku] ~\text{ and }~ \|x\|_u := \inf \{k > 0 : x \in [-ku,ku]\}.$$ 
Then $(X_u,\|\cdot\|_u)$ is a Banach space which is continuously embedded in $(X,\|\cdot\|)$ \cite[Proposition 19.9]{Deimling}. Furthermore, $\overline{C_u} = C \cap X_u$ is a normal, closed cone with interior equal to $C_u$ in $(X_u,\|\cdot\|_u)$. So we have the following corollary of Theorem \ref{thm:main}. 

\begin{corollary}
Let $C$ be a normal, closed cone in a Banach space $X$, let $u \in C$, and let $C_u$ be the part of $C$ containing $u$.  Suppose that $f: C_u \rightarrow C_u$ is type K order-preserving, subhomogeneous, and has a fixed point in $C_u$.  If $\orbit(x,f)$ has compact closure for some $x \in C_u$, then $f^k(x)$ converges to a fixed point of $f$.  
\end{corollary}

If $C$ is a normal, closed cone with nonempty interior in an infinite dimensional Banach space, and if $f:\inter{C} \rightarrow \inter{C}$ is order-preserving, subhomogeneous, and there is a measure of non-compactness $\gamma$ such that $f$ is $\gamma$-condensing, that is $\gamma(f(B)) < \gamma(B)$ for every bounded subset $B \subset \inter{C}$, then it is also known that $f$ has a fixed point in $\inter{C}$ if and only if $\mathcal{O}(x,f)$ is bounded in Thompson's metric for every $x \in \inter{C}$ \cite[Theorems 4.3 and 4.4]{Nussbaum88}. In that case, the assumption that the closure of $\mathcal{O}(x,f)$ is compact is also automatic since $\orbit (x,f) = \{x\} \cup f(\orbit (x,f))$ which means that $\gamma(\orbit(x,f)) = \gamma(f(\orbit(x,f)))$ and that can only be true if $\gamma(\orbit(x,f)) = 0$. See \cite[Section 7]{Deimling} for more details about measures of non-compactness.  

If $f: \inter{C} \rightarrow \inter{C}$ is order-preserving and subhomogeneous, then $\lambda f + (1-\lambda) \id$ is a type K order-preserving subhomogeneous map with the same fixed points as $f$ for all $0 < \lambda < 1$. Furthermore, if $f$ is $\gamma$-condensing, then so is $f_\lambda$. This follows from the fact that a measure of non-compactness $\gamma$ is a seminorm on the set of bounded subsets of $X$, that is, $\gamma(cB) = |c|\gamma(B)$ and $\gamma(B_1+B_2) \le \gamma(B_1) +\gamma(B_2)$ for all bounded sets $B, B_1, B_2$ and constants $c \in \R$ \cite[Proposition 7.2]{Deimling}. Therefore the following corollary is true. Note the similarity to Krasnoselskii-Mann iteration for norm nonexpansive maps (see e.g., \cite[Section 5]{GaSt20}).  

\begin{corollary} \label{cor:blue1}
Let $C$ be a normal, closed cone with nonempty interior in a Banach space $X$ and let $f:\inter{C} \rightarrow \inter{C}$ be order-preserving and subhomogeneous. Let $\gamma$ be a measure of non-compactness on $X$ and suppose that $f$ is $\gamma$-condensing. Let $f_\lambda = \lambda f + (1- \lambda) \id$ for some $0 < \lambda < 1$. If $f$ has a fixed point in $\inter{C}$, then $f_\lambda^k(x)$ converges to a fixed point of $f$ for every $x \in \inter{C}$. 
\end{corollary}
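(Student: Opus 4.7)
The plan is to apply Theorem \ref{thm:main} to $f_\lambda$ and then observe that the fixed-point sets of $f_\lambda$ and $f$ coincide. Much of the work has already been sketched in the paragraph preceding the corollary; the proof is essentially a matter of verifying the four hypotheses of Theorem \ref{thm:main} for $f_\lambda$.

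First I would check three elementary properties by direct computation. Subhomogeneity: for $t \ge 1$,
\[f_\lambda(tx) = \lambda f(tx) + (1-\lambda)tx \le \lambda t f(x) + (1-\lambda)tx = t f_\lambda(x).\]
Type K order-preservation with the uniform constant $\epsilon = 1-\lambda$: if $x \le y$ in $\inter C$, then
\[f_\lambda(y) - f_\lambda(x) = \lambda(f(y)-f(x)) + (1-\lambda)(y-x) \ge (1-\lambda)(y-x),\]
using only that $f$ is order-preserving (this in particular shows $f_\lambda$ is order-preserving). Coincidence of fixed points: $f_\lambda(u) = u$ is equivalent to $\lambda f(u) = \lambda u$, which is equivalent to $f(u) = u$, so the assumed fixed point of $f$ in $\inter C$ is also a fixed point of $f_\lambda$.

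The remaining hypothesis, compactness of $\overline{\orbit(x,f_\lambda)}$, is where the $\gamma$-condensing assumption enters. I would first check that $f_\lambda$ is itself $\gamma$-condensing: for any bounded $B \subset \inter C$ with $\gamma(B) > 0$, the inclusion $f_\lambda(B) \subset \lambda f(B) + (1-\lambda) B$ and the seminorm properties of $\gamma$ recalled just before the corollary give
\[\gamma(f_\lambda(B)) \le \lambda \gamma(f(B)) + (1-\lambda)\gamma(B) < \lambda \gamma(B) + (1-\lambda)\gamma(B) = \gamma(B).\]
With $f_\lambda$ order-preserving, subhomogeneous, $\gamma$-condensing, and possessing a fixed point in $\inter C$, the results of Nussbaum \cite[Theorems 4.3 and 4.4]{Nussbaum88} cited in the same paragraph then imply that $\overline{\orbit(x,f_\lambda)}$ is compact for every $x \in \inter C$. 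Theorem \ref{thm:main} applied to $f_\lambda$ yields convergence of $f_\lambda^k(x)$ to a fixed point of $f_\lambda$, and by the coincidence of fixed-point sets this limit is a fixed point of $f$. There is no substantial obstacle; the only step which is not purely formal is the $\gamma$-condensing verification, which is immediate from the seminorm properties of $\gamma$.
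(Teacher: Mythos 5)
Your proposal is correct and follows exactly the route the paper intends: the entire content of the corollary is the paragraph immediately preceding it, which you have simply spelled out in full (subhomogeneity and type K order-preservation of $f_\lambda$ by direct computation, coincidence of fixed-point sets, $\gamma$-condensing of $f_\lambda$ via the seminorm properties of $\gamma$, orbit compactness via Nussbaum's theorems, then Theorem~\ref{thm:main}). One small point worth flagging: the invocation of \cite[Theorems 4.3 and 4.4]{Nussbaum88} for compactness of orbits requires $C$ to be normal, a hypothesis present in the paper's preceding discussion but omitted from the corollary's statement; this is an imprecision in the paper rather than a gap in your argument.
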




If $f: \inter{C} \rightarrow \inter{C}$ is order-preserving and homogeneous, we may want to find an eigenvector in the interior that has an eigenvalue not equal to one.  If we don't know the eigenvalue in advance, then we can iterate the normalized map $g(x) = f(x)/\|f(x)\|$.  

\begin{lemma} \label{lem:compactorbits}
Let $C$ be a normal, closed cone with nonempty interior in a Banach space $X$.  Let $f: \inter{C} \rightarrow \inter{C}$ be order-preserving and homogeneous, and suppose that $f$ has an eigenvector in $\inter{C}$.  Let $g(x) = f(x)/\|f(x)\|$ for all $x \in \inter{C}$.  Then $\cl{\orbit(x,r_C(f)^{-1} f)}$ is compact if and only if $\cl{\orbit(x,g)}$ is compact. 
\end{lemma}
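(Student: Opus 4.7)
The plan is to show that the orbits $\orbit(x, h)$ and $\orbit(x, g)$, where $h := r_C(f)^{-1} f$, are related by a scalar factor that is pinned between two positive constants independent of $k$; then compactness of either closure transfers to the other via continuity of scalar multiplication and of $v \mapsto v/\|v\|$.

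First, since $f$ is homogeneous, an easy induction gives $g^k(x) = f^k(x)/\|f^k(x)\|$, and therefore
\[
g^k(x) = \frac{h^k(x)}{\|h^k(x)\|}, \qquad h^k(x) = \|h^k(x)\|\, g^k(x)
\]
for every $k \in \N$. So the whole question reduces to controlling the scalar $\|h^k(x)\|$.

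The control comes from the eigenvector. Let $u \in \inter C$ satisfy $f(u) = r_C(f) u$; then $h(u) = u$. Because $h$ is order-preserving and homogeneous, it is Thompson-nonexpansive, so $d_T(h^k(x), u) \le d_T(x, u) =: r$ for every $k$. This yields the order bounds $e^{-r} u \le h^k(x) \le e^r u$, and normality of $C$ with constant $\kappa$ then gives
\[
c_1 := \kappa^{-1} e^{-r}\|u\| \,\le\, \|h^k(x)\| \,\le\, \kappa e^r \|u\| =: c_2,
\]
both positive. By continuity of the norm, the same two-sided bound holds on the closure $\cl \orbit(x, h)$.

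For the equivalence itself: the map $v \mapsto v/\|v\|$ is continuous on $\{v \in X : \|v\| \ge c_1\}$, which contains $\cl\orbit(x, h)$. Hence if $\cl\orbit(x, h)$ is compact, its continuous image is compact and contains $\orbit(x, g)$, so $\cl\orbit(x, g)$ is compact. Conversely, if $\cl\orbit(x, g)$ is compact, then the image of the compact set $[c_1, c_2] \times \cl\orbit(x, g)$ under the (continuous) scalar multiplication $(\lambda, v) \mapsto \lambda v$ is compact and contains $\orbit(x, h)$, so $\cl\orbit(x, h)$ is compact.

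There is no real obstacle here; the only substantive input is the combination of Thompson-nonexpansiveness (to force the $h$-orbit to stay within a bounded $d_T$-distance of the fixed eigenvector $u$) with normality of $C$ (to translate that order-theoretic bound into two-sided positive norm bounds on $\|h^k(x)\|$). Everything else is routine continuity on a set bounded away from $0$.
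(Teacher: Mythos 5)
Your proof is correct and takes essentially the same approach as the paper: reduce to $h = r_C(f)^{-1}f$, use the eigenvector together with normality to pin $\|h^k(x)\|$ between two positive constants, and transfer compactness between the two orbit closures via continuity of scalar multiplication (the paper phrases both directions through sets of the form $[a,b]\cdot K$, while you use the map $v \mapsto v/\|v\|$ for one direction, but this is cosmetic). You also spell out, via Thompson nonexpansiveness, where the two-sided order bound on $h^k(x)$ comes from, a detail the paper leaves implicit.
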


\begin{proof} 
For any closed interval $[a,b] \subset \R$ and compact set $K \subset X$, let 
$$[a,b] \cdot K = \{tx : t \in [a,b], x \in K \}.$$ 
Note that $[a,b] \cdot K$ is compact since it is the image of the compact set $[a,b] \times K \subset \R \times X$ under the continuous map $(t,x) \mapsto tx$.  

Let $u \in \inter{C}$ be an eigenvector of $f$ with $\|u \| = 1$.  We may assume without loss of generality that the spectral radius of $f$ is 1 by replacing $f$ with $r_C(f)^{-1} f$.
Then $f(u) = u$, so there are constants $\alpha, \beta > 0$ such that 
$$\alpha u \le f^k(x) \le \beta u$$ 
for all $k \in \N$. 
Since $C$ is a normal cone, there is a constant $\kappa > 0$ such that
$$\alpha \kappa^{-1}  \le \|f^k(x)\| \le \beta \kappa$$ 
for all $k \in \N$.
Therefore 
$$\cl{\orbit(x,f)} \subset [\alpha \kappa^{-1},\beta \kappa] \cdot \cl{\orbit(x,g)}$$
and 
$$\cl{\orbit(x,g)} \subset [\kappa^{-1} \beta^{-1},\kappa \alpha^{-1}] \cdot \cl{\orbit(x,f)}.$$
Thus, if either $\cl{\orbit(x,f)}$ or $\cl{\orbit(x,g)}$ is compact, then so is the other. 
\end{proof}

Combining Lemma \ref{lem:compactorbits} with Theorem \ref{thm:main}, we have the following.

\begin{corollary} \label{cor:mainHomog}
Let $C$ be a normal, closed cone with nonempty interior in a Banach space.  Let $f: \inter{C} \rightarrow \inter{C}$ be Type K order-preserving and homogeneous. Let $g(x) = f(x)/\|f(x)\|$ for all $x \in \inter{C}$. If $f$ has an eigenvector in $\inter{C}$ and $\orbit(x,g)$ has compact closure, then $g^k(x)$ converges to an eigenvector of $f$ for all $x \in \inter{C}$.  
\end{corollary}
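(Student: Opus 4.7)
The plan is to reduce the claim to Theorem \ref{thm:main} by passing to the rescaled map $h := r_C(f)^{-1} f$ rather than working directly with $g$, since $g$ itself need not be order-preserving (normalization by $\|f(\cdot)\|$ can reverse inequalities). Because $f$ is homogeneous and type K order-preserving, the positive scalar multiple $h$ inherits both properties: if $y \ge x$ and $f(y)-f(x) \ge \epsilon(y-x)$, then $h(y)-h(x) \ge r_C(f)^{-1}\epsilon(y-x)$, and of course $h$ is homogeneous hence subhomogeneous. The hypothesized eigenvector $u \in \inter C$ satisfies $f(u) = r_C(f) u$, so $h(u) = u$, providing a fixed point of $h$ in $\inter C$. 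Lemma \ref{lem:compactorbits} converts the assumed compactness of $\cl{\orbit(x,g)}$ into compactness of $\cl{\orbit(x,h)}$.

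With these hypotheses verified, Theorem \ref{thm:main} applies to $h$ and yields a fixed point $v \in \inter C$ with $h^k(x) \to v$. To transfer this back to $g$, I will observe the identity
\[
g^k(x) = \frac{f^k(x)}{\|f^k(x)\|} = \frac{h^k(x)}{\|h^k(x)\|},
\]
which follows by a short induction from the degree-one homogeneity of $f$: at each step, both the numerator and denominator pick up the same positive scalar from the previous normalization. Since $\|h^k(x)\| \to \|v\| > 0$ (as $v \in \inter C$ is nonzero), continuity of the map $w \mapsto w/\|w\|$ gives $g^k(x) \to v/\|v\|$. Finally, $v = h(v) = r_C(f)^{-1} f(v)$ rearranges to $f(v) = r_C(f)\, v$, and by homogeneity of $f$ the normalized limit $v/\|v\|$ is also an eigenvector of $f$ with eigenvalue $r_C(f)$.

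There is no substantive obstacle here; the whole argument is essentially bookkeeping. The only step requiring a moment of thought is recognizing that one should not try to apply Theorem \ref{thm:main} to $g$ directly (whose monotonicity fails) but rather to the spectrally-normalized homogeneous map $h$, and then reconstruct $g^k(x)$ as the normalization of $h^k(x)$.
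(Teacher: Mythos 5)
Your proposal is correct and takes essentially the same route the paper intends: apply Lemma~\ref{lem:compactorbits} to transfer compactness from $\orbit(x,g)$ to $\orbit(x,r_C(f)^{-1}f)$, apply Theorem~\ref{thm:main} to the rescaled map, and normalize the resulting limit. The paper leaves these steps implicit (``combining Lemma~\ref{lem:compactorbits} with Theorem~\ref{thm:main}''); you have supplied the bookkeeping, including the correct observation that $g$ itself is not order-preserving and the identity $g^k(x)=h^k(x)/\|h^k(x)\|$.
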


\section{Linear convergence to fixed points} \label{sec:linearrate}

Let $(M,d)$ be a metric space and $x^k$ be a sequence in $M$.  We say that $x^k$ converges to $y \in M$ at a \emph{linear rate} if there exist constants $0 < \theta < 1$ and $c > 0$ such that $d(x^k,y) \le c \, \theta^k$ for all $k \in \N$. Note that $x^k$ converges to $y$ at a linear rate if and only if $\limsup_{k \rightarrow \infty} d(x^k,y)^{1/k} < 1$.  This type of convergence is commonly referred to as \emph{R-linear convergence} to distinguish it from the stronger \emph{Q-linear convergence} where there is a constant $0 < \gamma < 1$ such that  $d(x^{k+1},y) \le \gamma \, d(x^k,y)$ for all $k \in \N$.  

The following technical lemma shows that for order-preserving homogeneous maps on a cone, if a sequence of normalized iterates converge to an eigenvector in the interior at a linear rate, then you get the same convergence constant regardless of which metric is used.  

\begin{lemma} \label{lem:technical}
Let $C$ be a normal, closed cone with nonempty interior in a Banach space.  Let $f: \inter{C} \rightarrow \inter{C}$ be order-preserving and homogeneous. Let $x, u \in \inter{C}$ with $\|u\| = 1$, and let $0 < \theta < 1$.  Then the following are equivalent.
\begin{enumerate}[(a)]
\item \label{item:dHlim} $\ds \limsup_{k \rightarrow \infty} d_H(f^k(x),u)^{1/k} \le \theta.$
\item \label{item:dTlim} There is a $\lambda > 0$ such that $\ds \limsup_{k \rightarrow \infty} d_T(f^k(x)/r_C(f)^k,\lambda u)^{1/k} \le \theta.$
\item \label{item:stronglim} There is a $\lambda > 0$ such that $\ds \limsup_{k \rightarrow \infty} \left\|\frac{f^k(x)}{r_C(f)^k} - \lambda u \right\|^{1/k} \le \theta.$ 
\item \label{item:weaklim} $\ds \limsup_{k \rightarrow \infty} \left\| \frac{f^k(x)}{\|f^k(x)\|} - u \right\|^{1/k} \le \theta.$
\end{enumerate}
\end{lemma}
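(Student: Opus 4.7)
The plan is to establish the four-way equivalence through the cycle (b) $\Leftrightarrow$ (c), (c) $\Rightarrow$ (d), (d) $\Rightarrow$ (a), (a) $\Rightarrow$ (b), using three shared tools: scale invariance of Hilbert's projective metric, the inequality $d_H \le 2 d_T$ from \eqref{HilbertThompson}, and the local Lipschitz equivalence of Thompson's metric and the norm \eqref{normalThompson} near any interior point. The three easier implications then go through fairly routinely, with the only delicate step being (a) $\Rightarrow$ (b).

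For (b) $\Leftrightarrow$ (c), since $\lambda u \in \inter C$, some closed ball $N_R(\lambda u)$ lies in $\inter C$; on it \eqref{normalThompson} pinches $d_T(\cdot, \lambda u)$ and $\|\cdot - \lambda u\|$ within multiplicative constants, so their $k$th-root $\limsup$s coincide. For (c) $\Rightarrow$ (d), set $a_k := f^k(x)/r_C(f)^k$; the reverse triangle inequality gives $\bigl|\|a_k\| - \lambda\bigr| \le \|a_k - \lambda u\|$, and writing $a_k/\|a_k\| - u = (a_k - \|a_k\|\,u)/\|a_k\|$ with $\|a_k\| \to \lambda > 0$ delivers the required rate. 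For (d) $\Rightarrow$ (a), scale invariance gives $d_H(f^k(x), u) = d_H(f^k(x)/\|f^k(x)\|, u)$, and once $f^k(x)/\|f^k(x)\|$ is close to $u$ in norm (as (d) ensures), combining \eqref{HilbertThompson} with \eqref{normalThompson} yields $d_H(f^k(x), u) \le 2c\bigl\|f^k(x)/\|f^k(x)\| - u\bigr\|$.

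The main work is (a) $\Rightarrow$ (b). First I verify that $u$ is forced to be an eigenvector of $f$ with eigenvalue $r_C(f)$: since $f$ is $d_H$-nonexpansive on $\inter C$, the bound $d_H(f^{k+1}(x), f(u)) \le d_H(f^k(x), u) \to 0$ combined with $d_H(f^{k+1}(x), u) \to 0$ forces $d_H(u, f(u)) = 0$, so $f(u) = \mu u$, and necessarily $\mu = r_C(f)$ (as noted in Section~\ref{sec:prelim}). Now set $z_k := f^k(x)/r_C(f)^k$, $M_k := M(z_k/u)$, $m_k := m(z_k/u)$, so that $m_k u \le z_k \le M_k u$ and $d_H(z_k, u) = \log(M_k/m_k) = d_H(f^k(x), u)$ by scale invariance. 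Applying the order-preserving homogeneous map $g := f/r_C(f)$, which fixes $u$, to the sandwich produces $m_k u \le z_{k+1} \le M_k u$, so $m_k$ is nondecreasing and $M_k$ is nonincreasing; since $M_k/m_k \to 1$, both converge to a common limit $\lambda \in [m_0, M_0]$ with $m_k \le \lambda \le M_k$. This two-sided squeeze gives
\[
d_T(z_k, \lambda u) = \max\left(\log\frac{M_k}{\lambda},\ \log\frac{\lambda}{m_k}\right) \le \log\frac{M_k}{m_k} = d_H(f^k(x), u),
\]
and taking $1/k$-th power $\limsup$ produces (b).

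The principal obstacle is extracting a single limiting scalar $\lambda$ in this last step so that a $d_H$-rate converts to a $d_T$-rate. The monotonicity of $(M_k, m_k)$ under the order-preserving dynamics is essential: it secures the squeeze $m_k \le \lambda \le M_k$, without which $d_T(z_k, \lambda u)$ would not be automatically dominated by $d_H(f^k(x), u)$.
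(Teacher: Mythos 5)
Your proof is correct and follows essentially the same route as the paper's: the same cycle (a)$\Rightarrow$(b)$\Rightarrow$(c)$\Rightarrow$(d)$\Rightarrow$(a), and for the delicate (a)$\Rightarrow$(b) step the same mechanism — showing $u$ is a fixed point of the normalized map, using order-preservation and homogeneity to get monotonicity of $M_k$ and $m_k$, squeezing out a common limit $\lambda$ with $m_k \le \lambda \le M_k$, and bounding $d_T(z_k,\lambda u)$ by $d_H(z_k,u)$. The remaining implications also coincide, relying on \eqref{normalThompson}, \eqref{HilbertThompson}, the triangle inequality, and scale invariance of $d_H$, exactly as in the paper.
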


\begin{proof}
We can assume without loss of generality that the cone spectral radius $r_C(f)$ is one by replacing $f$ with $r_C(f)^{-1} f$.  We make this assumption throughout the proof. \\

\noindent
\ref{item:dHlim} $\Rightarrow$ \ref{item:dTlim}. By nonexpansiveness, 
\begin{align*}
d_H(u,f(u)) &= \lim_{k \rightarrow \infty} d_H(f^{k+1}(x),f(u)) \\
&\le \lim_{k \rightarrow \infty} d_H(f^k(x),u) = 0.
\end{align*}
Therefore $u$ is an eigenvector of $f$ and its eigenvector must be $r_C(f) = 1$. Since $f(u) = u$, it follows that $M(f^k(x)/u)$ is a decreasing sequence and $m(f^k(x)/u)$ is an increasing sequence. The fact that 
$$\lim_{k \rightarrow \infty} \log \left( \frac{M(f^k(x)/u)}{m(f^k(x)/u)} \right) = \lim_{k \rightarrow \infty} d_H(f^k(x),u) = 0$$ 
implies that 
$$\lim_{k \rightarrow \infty} M(f^k(x)/u) = \lim_{k \rightarrow \infty} m(f^k(x)/u).$$
Let $\lambda = \lim_{k \rightarrow \infty} M(f^k(x)/u)$. 
Note that $M(f^k(x)/\lambda u) \ge 1$ and $m(f^k(x)/\lambda u) \le 1$ for all $k \in \N$. Then the definitions of $d_H$ and $d_T$ imply that 
$$d_T(f^k(x),\lambda u) \le d_H(f^k(x),\lambda u) ~\text{ for all } k \in \N.$$
Therefore $\limsup_{k \rightarrow \infty} d_T(f^k(x),\lambda u)^{1/k} \le \limsup_{k \rightarrow \infty} d_H(f^k(x),\lambda u)^{1/k} \le \theta.$ \\

\noindent
\ref{item:dTlim} $\Rightarrow$ \ref{item:stronglim}. Let $N_r(u) = \{ x \in X : \|x-u\| \le r \}$ with $r> 0$ small enough so that $N_r(u) \subset \inter{C}$. 
Then there is a constant $c > 0$ such that \eqref{normalThompson} holds for all $x \in N_r(u)$.
Therefore 
$$\| f^k(x) - \lambda u \| \le c d_T(f^k(x),\lambda u) $$
for all $k \in \N$ sufficiently large.  This implies the conclusion.\\

\noindent
\ref{item:stronglim} $\Rightarrow$ \ref{item:weaklim}. This follows from the following inequality.
\begin{align*}
\left\| \frac{f^k(x)}{\|f^k(x)\|} - u \right\| &\le \left\| \frac{f^k(x)}{\|f^k(x)\|} - \frac{f^k(x)}{\lambda} \right\| + \left\| \frac{f^k(x)}{\lambda} - u \right\|  & (\text{triangle ineq.})\\
&= \left| 1 - \frac{\|f^k(x)\|}{\lambda} \right| + \left\| \frac{f^k(x)}{\lambda} - u \right\| \\
&= \left| \|u\| - \frac{\|f^k(x)\|}{\lambda} \right| + \left\| \frac{f^k(x)}{\lambda} - u \right\| \\
&\le \frac{2}{\lambda} \left\| f^k(x) - \lambda u \right\| & (\text{triangle ineq.})
\end{align*} 
for all $k \in \N$. 

\noindent
\ref{item:weaklim} $\Rightarrow$ \ref{item:dHlim}. The following inequality immediately implies the conclusion.
\begin{align*}
d_H(f^k(x), u) &= d_H(f^k(x)/\|f^k(x)\|, u) \\
&\le 2 d_T(f^k(x)/\|f^k(x)\|, u) & (\text{by } \eqref{HilbertThompson})\\
&\le 2 c \left\|\frac{f^k(x)}{\|f^k(x)\|} - u \right\| & (\text{by } \eqref{normalThompson})
\end{align*}
for all $k \in \N$ sufficiently large.
\end{proof}

\subsection{Piecewise affine functions}

Let $M$ be a convex subset of a Banach space $X$.  We say that $f: M \rightarrow M$ is \emph{piecewise affine} if $M$ can be decomposed into a finite union of closed convex sets on each of which $f$ is affine linear. In some special cases, it has been observed that if the iterates of certain nonexpansive piecewise affine maps converge to a fixed point, then they do so at a linear rate. A result of Robinson \cite{Robinson79} implies that this is true for piecewise affine maps on $\R^n$ that are nonexpansive with respect to the Euclidean norm. It is also known for the value iteration operator for solving a Markov decision process \cite{ScFe79} (see also \cite[Appendix 4.A]{Zijm82}).  The following general result appears to be new, however.

\begin{theorem} \label{thm:piecewise}
Let $(X, \|\cdot\|)$ be a finite dimensional Banach space and let $M$ be a convex subset of $X$.  Let $d$ be a metric on $M$ which induces a topology on $M$ that is equivalent to the topology inherited from the norm.  Suppose that $f:M \rightarrow M$ is a piecewise affine map that is nonexpansive with respect to $d$. If for some $x \in M$, the sequence $f^k(x)$ converges to a fixed point $u$, then there are constants $m \in \N$ and $0 < \gamma < 1$ such that 
$$\|f^{k+m}(x) - u \| \le \gamma \|f^k(x)-u\|$$
for all $k \in \N$ sufficiently large.
\end{theorem}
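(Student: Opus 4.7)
The plan is to reduce Theorem~\ref{thm:piecewise} to a local piecewise-linear analysis at $u$, capture the convergence rate by a subadditive sequence, and then rule out the degenerate case where the rate equals $1$ using compactness together with the finiteness of linear pieces.

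For the \textbf{localization}, translate so that $u = 0$. Because $d$ induces the norm topology and $f$ is $d$-nonexpansive, the sequence $v_k := f^k(x)$ tends to $0$ in norm, and for $k$ sufficiently large $v_k$ lies in a neighborhood $U$ of $0$ meeting only those pieces $P_1, \dots, P_N$ of the decomposition that contain $0$. Since $f$ is affine on each such $P_i$ with $f(0) = 0$, each restriction $f|_{P_i \cap U}$ agrees with the action of a linear operator $A_i$ on $X$, and convexity of each $P_i$ through $0$ gives positive homogeneity of $f$ on $M \cap U$: $f(ty) = t f(y)$ for $t \in [0,1]$. Selecting an itinerary $\sigma_k \in \{1, \dots, N\}$ with $v_k \in P_{\sigma_k}$, one has $v_{k+n} = A_{\sigma_{k+n-1}} \cdots A_{\sigma_k} v_k$ for $k$ large.

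Next, apply \textbf{Fekete's lemma}. The sequence $b_n := \limsup_{k \to \infty} \log\bigl(\|v_{k+n}\|/\|v_k\|\bigr)$ is subadditive since $\limsup$ of a sum is bounded by the sum of $\limsup$'s, so $\lambda := \lim_n b_n/n = \inf_n b_n/n$ exists. If $\lambda < 0$, choose $m$ with $b_m < 0$ and set $\gamma := e^{b_m/2} < 1$; then $\|v_{k+m}\| \le \gamma \|v_k\|$ for all $k$ sufficiently large, which is the conclusion of the theorem.

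The remaining task is to show $\lambda < 0$, and this is where I expect the main obstacle to lie. The approach is by contradiction: if $b_n \ge 0$ for every $n$, then for each $n$ a limsup extraction together with compactness of the unit sphere and finiteness of itineraries yields a unit direction $V_n^*$ and a word $(s_0^{(n)}, \dots, s_{n-1}^{(n)})$ with $\|A_{s_{n-1}^{(n)}} \cdots A_{s_0^{(n)}} V_n^*\| \ge 1$, each partial image lying in the appropriate tangent cone at $0$. A further diagonal extraction in the compact product space $S \times \{1, \dots, N\}^{\N}$ yields a limiting direction $V^*$ and an infinite itinerary $\bar{s}$ whose associated sphere orbit never contracts. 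The crux of the argument is to translate this abstract non-contracting structure into a concrete contradiction: positive homogeneity from the first step should lift $(V^*, \bar{s})$ to a genuine $f$-orbit of $tV^*$ for small $t$, while a finite-dimensional spectral analysis of the semigroup generated by $\{A_1, \dots, A_N\}$ should force any infinite non-contracting product to arise from a genuine eigenvector with unit-modulus eigenvalue; such a configuration, appearing along the actual orbit $v_k$ as an asymptotic fixed direction, must contradict $\|v_k\| \to 0$. Formalizing this final linear-algebraic reduction---linking the abstract limit object back to the concrete iteration---is the technical heart of the proof.
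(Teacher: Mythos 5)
Your proposal has a genuine gap, and you acknowledge it yourself: showing $\lambda < 0$ ``is the technical heart of the proof,'' and the contradiction argument you sketch for this is not just unformalized but unlikely to go through as outlined. The extraction procedure in the compact product space $S \times \{1,\dots,N\}^{\N}$ produces an abstract limiting direction $V^*$ and itinerary $\bar s$, but there is no reason this pair corresponds to an actual orbit of $f$: the sets $P_i$ are determined by inequalities, and a limit of directions $v_{k_j}/\|v_{k_j}\|$ with itineraries realized along the orbit need not itself produce a legal infinite itinerary, because the constraint that each intermediate iterate lies in the right piece is an open condition that may fail in the limit. Moreover, the claim that ``any infinite non-contracting product must arise from a genuine eigenvector with unit-modulus eigenvalue'' is false for general finite semigroups of matrices (this is the joint spectral radius phenomenon), so the ``finite-dimensional spectral analysis'' you allude to would need substantial additional structure that you have not identified. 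In short, the Fekete framing correctly isolates what must be shown but does not reduce it to anything you can complete.

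The paper's proof is considerably more elementary and avoids any spectral or semigroup analysis. After establishing (as you do) the local radial homogeneity $f^k(\lambda x + (1-\lambda)u) = \lambda f^k(x) + (1-\lambda)u$ on a ball $B_R(u)$, it considers the set $U = \{x \in M : f^k(x) \to u\}$, shows $U$ is closed via nonexpansiveness, and observes that $B_R(u) \cap U$ is then compact. The increasing open sets $V_k = \{x : d(f^k(x),u) < r\}$ cover this compact set, giving a single $m$ with $f^m(B_R(u)\cap U) \subseteq B_r(u)\cap U$, hence (passing between $d$-balls and norm-balls) $f^m$ maps $N_\beta(u)\cap U$ into $N_\alpha(u)\cap U$ for some $\alpha < \beta$. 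Homogeneity then scales this uniform contraction to every point of $N_\beta(u)\cap U$, yielding $\gamma = \alpha/\beta$ directly. The key idea you are missing is to work with the full basin $U$ rather than a single orbit: closedness of $U$ plus finite-dimensionality hands you compactness, which converts pointwise convergence into a uniform $m$ without any analysis of the matrices $A_i$ or their products.
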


\begin{proof} We will use the following notation for closed balls centered at $u$ in $M$. For any $R > 0$, let 
$$B_R(u) := \{x \in M : d(x,u) \le R\}$$
and let
$$N_R(u) := \{x \in M : \|x-u\| \le R \}.$$
Since $f$ is a piecewise affine and $u$ is a fixed point of $f$, there is a closed ball $B_R(u)$ such that 
$$
f(\lambda x + (1-\lambda) u) = \lambda f(x) + (1-\lambda) u
$$
whenever $x \in B_R(u)$ and $0 < \lambda < 1$. Since $f$ is nonexpansive with respect to $d$, $f(B_R(u)) \subseteq B_R(u)$, so by induction
\begin{equation} \label{locallyhomogeneous2}
f^k(\lambda x + (1-\lambda) u) = \lambda f^k(x) + (1-\lambda) u
\end{equation}
for all $x \in B_R(u)$, $0 < \lambda < 1$, and $k \in \N$.
Since the topology on $(M,d)$ is equivalent to the norm topology, there are constants $0 < r < R$ and $0 < \alpha < \beta$ such that 
$$B_r(u) \subseteq N_\alpha(u) \subset N_\beta(u) \subseteq B_R(u).$$ 
Let $U = \{x \in M : f^k(x) \text{ converges to } u \}$. Let us prove that $U$ is closed. Suppose that $x^n$ is a sequence in $U$ that converges to $x \in M$. Then 
\begin{align*}
\limsup_{k \rightarrow \infty} \|f^k(x) - u\| &= \limsup_{k \rightarrow \infty} \|f^k(x) - f^k(x^n)\| & (\text{since } \lim_{k \rightarrow \infty} f^k(x^n) = u)\\
 &\le \|x - x^n \| & (\text{by nonexpansiveness})
\end{align*}
for any $n$.  Since we can make $\|x-x^n\|$ arbitrarily small, we conclude that $f^k(x)$ converges to $u$, so $x \in U$ and $U$ is closed.  

Let $V_k = \{x \in M : d(f^k(x),u) < r\}$. Each set $V_k$ is open in $M$, and $V_k \subseteq V_{k+1}$ for all $k \in \N$ since $f$ is nonexpansive and $u$ is a fixed point.  Since $X$ is finite dimensional, the set $B_R(u) \cap U$ is compact. The sets $V_k$ are an open cover of $B_R(u) \cap U$, so there is a single $m \in \N$ such that $B_R(u) \cap U \subseteq V_m$.  Thus $f^m(B_R(u) \cap U) \subseteq B_r(u) \cap U$. This implies that $f^m(N_\beta(u) \cap U) \subseteq N_\alpha(u) \cap U$. 

Let $y \in N_\beta(u) \cap U$ with $y \neq u$.  Let $z = u + \lambda^{-1}(y-u)$ where $\lambda = \|y-u\|/\beta$. Then $\|z-u\| = \beta$ and $y = \lambda z + (1-\lambda) u$. By \eqref{locallyhomogeneous2}, 
\begin{align*}
\|f^k(y)- u\| &= \|\lambda f^k(z) + (1-\lambda) u - u\| \\
&= \lambda \|f^k(z) - u\|
\end{align*}
for all $k \in \N$.  Since $\lambda \ne 0$, it follows that $f^k(z)$ converges to $u$ and therefore $z \in N_\beta(u) \cap U$. Because $f^m(N_\beta(u) \cap U) \subseteq N_\alpha(u) \cap U$, we have
\begin{align*}
\|f^m(y)- u\| &= \lambda \|f^m(z) - u\| \\
&\le \lambda \alpha \le \frac{\alpha}{\beta} \|y-u\|.
\end{align*}
Since $f^k(x) \in N_\beta(u)$ for all $k$ sufficiently large, this completes the proof with $\gamma = \frac{\alpha}{\beta}$.  
\end{proof}

Applying Theorem \ref{thm:piecewise} to the metric space $(\inter{C}, d_T)$ gives:
\begin{corollary}
Let $C$ be a closed cone with nonempty interior in a finite dimensional Banach space. Let $f:\inter{C} \rightarrow \inter{C}$ be order-preserving, subhomogeneous, and piecewise affine.  If $f^k(x)$ converges to a fixed point $u \in \inter{C}$ for some $x \in \inter{C}$, then there exists $m \in \N$ and $0 < \gamma < 1$ such that 
$$\|f^{k+m}(x) - u\| \le \gamma \|f^{k}(x)-u\|$$ 
for all $k \in \N$ sufficiently large. 
\end{corollary}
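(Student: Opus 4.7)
The plan is to apply Theorem~\ref{thm:piecewise} directly with $M = \inter C$ and $d = d_T$. The bulk of the work is simply verifying each hypothesis of that theorem in the current setting; the corollary is essentially a repackaging of Theorem~\ref{thm:piecewise} in the language of cones.

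To that end, I would check the following items in turn. First, $\inter C$ is a convex subset of $X$, being the interior of the convex cone $C$. Second, $d_T$ is a metric on $\inter C$ since $\inter C$ is a single part of $C$, as noted in Section~\ref{sec:prelim}. Third, because $X$ is finite dimensional, the topology induced by $d_T$ on $\inter C$ coincides with the norm topology, as also recorded in Section~\ref{sec:prelim}. Fourth, $f$ is nonexpansive with respect to $d_T$ because it is order-preserving and subhomogeneous (\cite[Lemma~2.1.7]{LemmensNussbaum}). Fifth, $f$ is piecewise affine by hypothesis, and sixth, $f^k(x) \to u$ by hypothesis (and since the two topologies agree, this convergence is unambiguous).

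With all hypotheses met, Theorem~\ref{thm:piecewise} immediately produces constants $m \in \N$ and $0 < \gamma < 1$ such that $\|f^{k+m}(x) - u\| \le \gamma \|f^k(x) - u\|$ for all sufficiently large $k$, which is exactly what the corollary asserts. I do not anticipate a genuine obstacle here: the only point worth noting is that Theorem~\ref{thm:piecewise} already delivers its conclusion in the norm rather than in $d$, so no further comparison between $d_T$ and $\|\cdot\|$ is needed after the application.
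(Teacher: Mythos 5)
Your proof is correct and matches the paper's intent exactly: the paper itself introduces the corollary with the one-line remark that it follows by applying Theorem~\ref{thm:piecewise} to $(\inter C, d_T)$, and your write-up simply spells out the routine hypothesis checks (convexity of $\inter C$, $d_T$ being a metric there, equivalence of the $d_T$ and norm topologies in finite dimensions, and nonexpansiveness of $f$ under $d_T$) that make that application legitimate.
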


Combining the previous result with Lemma \ref{lem:technical}, we have:
\begin{corollary} \label{cor:piecewiseHomog}
Let $C$ be a closed cone with nonempty interior in a finite dimensional Banach space. Let $f:\inter{C} \rightarrow \inter{C}$ be order-preserving, homogeneous, and piecewise affine.  Let $g(x) = f(x)/\|f(x)\|$ for all $x \in \inter{C}$. If $g^k(x)$ converges to an eigenvector $u \in \inter{C}$ for some $x \in \inter{C}$, then there exists $0 < \theta < 1$ such that 
$$\limsup_{k \rightarrow \infty} \|g^{k}(x) - u\|^{1/k} \le \theta.$$ 
\end{corollary}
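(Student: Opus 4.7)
The plan is to reduce to the preceding corollary (linear rate for subhomogeneous piecewise affine maps with a fixed point in $\inter C$), using Lemma \ref{lem:technical} to translate between norm convergence of the normalized iterates $g^k(x)$ and of the unnormalized iterates $f^k(x)$. After replacing $f$ by $r_C(f)^{-1} f$, I may assume without loss of generality that $r_C(f) = 1$ and $f(u) = u$; this replacement preserves being order-preserving, homogeneous, and piecewise affine.

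The first step is to upgrade the hypothesis $g^k(x) \to u$ to show that the \emph{unnormalized} iterates $f^k(x)$ converge in norm to a positive scalar multiple of $u$. Since $f$ is order-preserving and homogeneous with $f(u) = u$, if $x \le \beta u$ then $f(x) \le \beta u$, so $M(f^k(x)/u)$ is nonincreasing in $k$; similarly $m(f^k(x)/u)$ is nondecreasing. Both quantities lie strictly between $0$ and $\infty$, hence have positive limits $\overline{\lambda} \ge \underline{\lambda} > 0$. On the other hand, the norm convergence $g^k(x) \to u$ in finite dimensions forces $d_H(g^k(x), u) \to 0$, and by scale invariance this equals $d_H(f^k(x), u) = \log(M(f^k(x)/u)/m(f^k(x)/u))$. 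Thus $\overline{\lambda} = \underline{\lambda} =: \lambda$, so $d_T(f^k(x), \lambda u) \to 0$. In finite dimensions every closed cone is normal, so \eqref{normalThompson} yields $\|f^k(x) - \lambda u\| \to 0$.

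Now $f$ is a subhomogeneous, order-preserving, piecewise affine map on $\inter C$ whose iterates $f^k(x)$ converge to the fixed point $\lambda u \in \inter C$, so the preceding corollary supplies $m \in \N$ and $0 < \gamma < 1$ with $\|f^{k+m}(x) - \lambda u\| \le \gamma \|f^k(x) - \lambda u\|$ for all sufficiently large $k$. Iterating along arithmetic progressions modulo $m$ gives $\limsup_{k \to \infty} \|f^k(x) - \lambda u\|^{1/k} \le \gamma^{1/m} < 1$. Since $r_C(f) = 1$, this is exactly condition (c) of Lemma \ref{lem:technical}, and the equivalence with condition (d) delivers the desired bound $\limsup_{k \to \infty} \|g^k(x) - u\|^{1/k} \le \gamma^{1/m} =: \theta < 1$.

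The main obstacle is the upgrade from directional convergence $g^k(x) \to u$ to norm convergence $f^k(x) \to \lambda u$ of the unnormalized iterates; without this, the piecewise affine corollary cannot be invoked, since that result is about convergence in the ambient norm rather than along rays. My approach handles this by exploiting the monotonicity of $M(f^k(x)/u)$ and $m(f^k(x)/u)$ (which requires that $u$ is itself a fixed point, not merely an eigendirection, hence the normalization by $r_C(f)$) together with the scale invariance of Hilbert's projective metric. Once $f^k(x) \to \lambda u$ is in hand, the rest is essentially a bookkeeping translation between the equivalent notions of linear convergence in Lemma \ref{lem:technical}.
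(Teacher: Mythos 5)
Your proof is correct and follows the route the paper intends: the paper states only that the result follows by ``combining the previous result with Lemma~\ref{lem:technical},'' and your argument fills in exactly those details — normalizing so that $r_C(f)=1$, upgrading $g^k(x)\to u$ to $f^k(x)\to\lambda u$ via the monotonicity of $M(f^k(x)/u)$ and $m(f^k(x)/u)$ plus the finite-dimensional equivalence of the $d_H$ and norm topologies and \eqref{normalThompson}, applying the piecewise affine corollary to get the geometric decay of $\|f^k(x)-\lambda u\|$, and then invoking the equivalence (c) $\Leftrightarrow$ (d) in Lemma~\ref{lem:technical}. The bridge step you highlight is indeed the same computation used in the (a)~$\Rightarrow$~(b) direction of Lemma~\ref{lem:technical}, so the argument is fully consistent with the paper's approach.
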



A theorem of Kohlberg \cite{Kohlberg80} says that if $X$ is a finite dimensional Banach space and $f: X \rightarrow X$ is piecewise affine and nonexpansive with respect to the norm on $X$, then $f$ has an invariant half-line. Specifically, there exist $v, w \in X$ such that $f(v+tw) = v+(t+1)w$ for all $t \ge 0$.  The vector $w$ is unique for $f$ and is referred to as the \emph{cycle time vector} in some applications.  Note that $f$ has a fixed point if and only if $w=0$.

Suppose that $f$ is a piecewise affine nonexpansive map that does not have a fixed point.  The following lemma says that after a finite number of steps, the dynamics of the iterates of $f$ on $X$ are completely determined by the dynamics of a piecewise affine nonexpansive map that does have a fixed point.  Thus Theorem \ref{thm:piecewise} can be applied to show that if $f^k(x)$ converges to one of the invariant half-lines of $f$, then the rate of convergence is linear. 
\begin{lemma}
Let $X$ be a finite dimensional Banach space.  Let $f: X \rightarrow X$ be piecewise affine and nonexpansive with respect to the norm on $X$. Suppose for $v,w \in X$ that $f(v+tw) = v+(t+1)w$ for all $t \ge 0$.  Let $g(x) = f(x) - w$ for all $x \in X$.  Then for every $x \in X$, there exists $m \in \N$ such that 
$$f^{k+m}(x) = g^k(f^m(x)) + k w$$
for all $k \in \N$.
\end{lemma}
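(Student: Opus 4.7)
The plan is to reduce the claim to a single commutation identity. Setting $y_k := g^k(f^m(x))$, the desired identity $f^{m+k}(x) = y_k + kw$ follows by induction on $k$ from
\[
f(y_k + kw) = f(y_k) + kw \quad \text{for every } k \ge 0,
\]
because then $f^{m+k+1}(x) = f(y_k + kw) = f(y_k) + kw = (g(y_k) + w) + kw = y_{k+1} + (k+1)w$. The whole argument is devoted to establishing this ``translation equivariance'' of $f$ along the segment from $y_k$ to $y_k + kw$, once $m$ is chosen large enough.

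The first ingredient is a tube confinement. Nonexpansiveness of $f$ together with $f^k(v) = v + kw$ gives $\|f^k(x) - v - kw\| \le \|x - v\| =: R$ for every $k$, so the orbit lies in the tube $T_R := \{v + tw + z : t \ge 0,\ \|z\| \le R\}$. Writing $y_k = f^{m+k}(x) - kw$ under the inductive hypothesis, the same bound yields $y_k = v + mw + z_k$ with $\|z_k\| \le R$, so the segment from $y_k$ to $y_k + kw$ is the straight strip $\{v + tw + z_k : t \in [m, m+k]\}$ inside $T_R$.

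The second ingredient classifies the finitely many closed convex pieces $P_1, \dots, P_N$ with $f|_{P_i}(y) = A_i y + b_i$. Call $P_i$ \emph{good} if $w \in \recc(P_i)$ and \emph{bad} otherwise. If $P_i$ is bad and $p_n = v + t_n w + z_n \in P_i \cap T_R$ with $t_n \to \infty$, then $(p_n - p_0)/t_n \to w$ for any fixed $p_0 \in P_i$, which (in finite dimension) would force $w \in \recc(P_i)$; thus $P_i \cap T_R$ is bounded and contained in $\{v + tw + z : t \le T_i\}$ for some $T_i$. If $P_i$ is good, then for any $p \in P_i$ the ray $p + sw$ lies in $P_i$ and $f(p + sw) = f(p) + sA_i w$; comparing with $f(v + sw) = v + (s+1)w$ via the global nonexpansiveness of $f$ gives
\[
\|f(p) - v - w + s(A_i w - w)\| = \|f(p+sw) - f(v+sw)\| \le \|p - v\|
\]
for every $s \ge 0$, which forces $A_i w = w$.

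Now choose $m$ larger than $\max_i T_i$ over the bad pieces. Then for every $k \ge 0$ the segment $\{v + tw + z_k : t \in [m, m+k]\}$ meets only good pieces, and on any good piece $P_i$ the function $t \mapsto f(v + tw + z_k) - tw$ reduces to the constant $A_i(v + z_k) + b_i$; continuity of $f$ glues these constants across piece boundaries, so this function is globally constant on $[m, m+k]$. Comparing its values at $t = m$ and $t = m+k$ yields the commutation $f(y_k + kw) = f(y_k) + kw$, and the induction closes. I expect the main obstacle to be the rigidity step $A_i w = w$ for good pieces: it is the only place where the global nonexpansiveness and the invariance of the half-line combine to constrain the local affine structure, while everything around it is either routine induction or continuity bookkeeping.
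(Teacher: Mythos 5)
Your proof is correct and takes essentially the same route as the paper's: confine the orbit to a tube of radius $\|x-v\|$ around the half‑line via nonexpansiveness, classify the affine pieces according to whether $w$ lies in their recession cone, show $A_i w = w$ on the recurring pieces by comparing against the invariant half‑line, and then pick $m$ large enough that the tail of the tube only meets such pieces. The only cosmetic differences are that the paper detects $w\in\recc(M_i)$ via a Hahn--Banach separation rather than the sequential recession‑cone criterion you use, and it closes the induction by establishing $f(x+tw)=f(x)+tw$ for all $x$ deep in the tube and all $t\ge 0$ rather than by your segment‑gluing argument; both are equivalent.
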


\begin{proof}
Since $f$ is piecewise affine, there is a finite collection of convex subsets $M_i \subset X$ such that $f(x) = A_i x + b_i$ for all $x \in M_i$ where $A_i$ is a linear transformation on $X$ and $b_i \in X$.  
Fix $R > 0$ and let $B_R(y) = \{x \in X: \|x-y\| \le R \}$. Suppose that $M_i \cap B_R(v+tw) \ne \varnothing$ for all $t \ge 0$ large enough.  Then we claim that $x+tw \in M_i$ whenever $x \in M_i$ and $t\ge 0$.  Otherwise, by the Hahn-Banach theorem there is a linear functional $\phi \in X^*$ and a constant $c \in \R$ such that $\phi(y) \ge c$ for all $y \in M_i$, and $\phi(x+tw) < c$ for some $t>0$.  Then $\phi(w) < 0$.  Observe that for any $y \in B_R(v+tw)$, 
\begin{align*}
\phi(y) &= \phi(v) + t \phi(w) + \phi(y-v+tw)\\
&\le \phi(v) + t \phi(w) + R\|\phi\|.
\end{align*}
In particular, if $t > 0$ is large enough, then $\phi(y) < c$ for all $y \in M_i \cap B_R(v+tw)$. That would mean that $M_i \cap B_R(v+tw)$ is empty, which is a contradiction.  This proves the claim.  

Now, suppose that $x + tw \in M_i$ for all $t \ge 0$. Then for all $t \ge 0$, we have
\begin{align*}
\|x-v\| &\ge \|f(x+tw) - f(v+tw)\| & (\text{nonexpansiveness})\\
&= \|A_ix + tA_iw + b_i - v - (t+1)w\| \\
&= \|f(x) + t A_iw - f(v) - tw \| \\
&\ge  t\| A_iw - w \| - \|f(x)-f(v) \|. & (\text{triangle ineq.})
\end{align*}
This means that $\|A_i w - w\|$ must be zero and therefore $f(x+tw) = f(x)+tw$ for all $t \ge 0$.  

We have shown that $f(x+tw) = f(x) + tw$ for all $x \in B_R(v+sw)$ and $t \ge 0$ when $s$ is sufficiently large.  In particular, for any such $x$, $f^k(x) = g^k(x) + k w$ for all $k \in \N$, where $g(y) = f(y) - w$ for all $y \in X$.  The conclusion follows by observing that if $\|x-v\| \le R$, then $f^m(x) \in B_R(v+mw)$ for $m \in \N$.  Thus by choosing $m$ large enough, we have $f^{k+m}(x) = g^k(f^m(x)) + k w$ for all $k \in \N$.
\end{proof}


\subsection{Analytic and multiplicatively convex functions}

Let $[n] := \{1, \ldots, n\}$.  Let $e_i$, $i \in [n]$, denote the standard basis vectors in $\R^n$.  Let $\mathbf{1}$ denote the vector in $\R^n$ with all entries equal to one.  We use $\log : \R^n_{>0} \rightarrow \R^n$ and $\exp: \R^n \rightarrow \R^n_{>0}$ to denote the entrywise natural logarithm and exponential functions on $\R^n$.  

Let $D$ be an open convex subset of $\R^n$.  A function $f: D \rightarrow \R^n$ is \emph{analytic} if each entry $f_i$ is a real analytic function, and $f$ is \emph{convex} if each entry $f_i$ is a convex function.  For $f: \R^n_{>0} \rightarrow \R^n_{>0}$ we say that $f$ is \emph{multiplicatively convex} if $\log \circ f \circ \exp$ is a convex function on $\R^n$.   

Order-preserving homogeneous maps $f: \R^n_{>0} \rightarrow \R^n_{>0}$ that are analytic and multiplicatively convex are particularly nice. Many important features of these functions are determined by the \emph{directed graph} $\mathcal{G}(f)$ \emph{associated with} $f$ which has vertices $[n]$ and an arc from $i$ to $j$ when 
$$\lim_{t \rightarrow \infty} f(\exp(te_j))_i = \infty.$$
If $f$ is analytic, it is differentiable and the Jacobian matrix $f'(x)$ at each $x \in \R^n_{>0}$ is a nonnegative matrix.  When $f$ is analytic and multiplicatively convex, the directed graph $\mathcal{G}(f'(x))$ associated with the Jacobian matrix is always the same as $\mathcal{G}(f)$ at every $x \in \R^n_{>0}$ \cite[Lemma 4.9]{Lins22}.   

\begin{lemma} \label{lem:analTypeK} 
Let $f: \R^n_{>0} \rightarrow \R^n_{>0}$ be order-preserving, homogeneous, multiplicatively convex, and analytic.  Then $f$ is type K order-preserving if and only if there is an arc from $i$ to itself in $\G(f)$ for every $i \in [n]$.  
\end{lemma}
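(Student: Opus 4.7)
The plan is to reformulate the graph-theoretic self-loop condition as a pointwise positivity condition on the diagonal entries of the Jacobian, and then to verify the two implications by applying the fundamental theorem of calculus componentwise. The bridge between the graph of $f$ and the graph of $f'(x)$ is precisely the cited \cite[Lemma 4.9]{Lins22}, which under the hypotheses of the present lemma tells us that $\G(f'(x)) = \G(f)$ for every $x \in \R^n_{>0}$. Consequently, a self-loop at $i$ in $\G(f)$ is equivalent to $\partial f_i/\partial x_i(x) > 0$ at every $x \in \R^n_{>0}$, while the absence of a self-loop at $i$ is equivalent to $\partial f_i/\partial x_i \equiv 0$ on $\R^n_{>0}$.

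For the implication $(\Leftarrow)$, I assume that every vertex of $\G(f)$ carries a self-loop. Given $x, y \in \R^n_{>0}$ with $y \ge x$, the segment $[x,y]$ is a compact subset of $\R^n_{>0}$, so the continuous strictly positive function $\partial f_i/\partial x_i$ admits a lower bound $\delta_i > 0$ on $[x,y]$ for each $i \in [n]$. Since $f$ is analytic (hence $C^1$), the integral mean-value formula gives
\begin{equation*}
f(y)_i - f(x)_i \;=\; \int_0^1 \sum_{j=1}^n \frac{\partial f_i}{\partial x_j}\bigl(x + s(y-x)\bigr)\,(y_j - x_j)\,ds.
\end{equation*}
Every summand is nonnegative, because all Jacobian entries of an order-preserving $C^1$ map are nonnegative and $y - x \ge 0$. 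Keeping only the $j=i$ term I obtain $f(y)_i - f(x)_i \ge \delta_i(y_i - x_i)$, and taking $\epsilon := \min_{i \in [n]} \delta_i > 0$ yields $f(y) - f(x) \ge \epsilon(y-x)$, as required.

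For the converse $(\Rightarrow)$ I argue contrapositively: suppose some index $i$ has no self-loop in $\G(f)$, so that $\partial f_i/\partial x_i \equiv 0$ on $\R^n_{>0}$. Fix any $x \in \R^n_{>0}$ and set $y = x + t e_i$ for some $t > 0$; then $y \ge x$ and $y \in \R^n_{>0}$. The same integral identity applied to the segment from $x$ to $y$ has all $j \ne i$ contributions vanishing because $y_j - x_j = 0$, and the $j = i$ contribution vanishing because $\partial f_i/\partial x_i \equiv 0$. Hence $(f(y) - f(x))_i = 0$, while $\epsilon (y - x)_i = \epsilon t > 0$ for any candidate $\epsilon > 0$, so the type K inequality fails at coordinate $i$.

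There is no serious obstacle: once the graph constancy result from \cite{Lins22} has been quoted, the remainder is essentially the fundamental theorem of calculus. The only mildly delicate point is producing a \emph{uniform} lower bound $\delta_i > 0$ for $\partial f_i/\partial x_i$ on the specific segment $[x,y]$, which follows from the compactness of $[x,y] \subset \R^n_{>0}$ together with continuity.
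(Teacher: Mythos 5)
Your proof is correct. The $(\Leftarrow)$ direction is essentially the same argument as the paper's: both invoke \cite[Lemma 4.9]{Lins22} to conclude that the diagonal Jacobian entries are strictly positive everywhere, and then use compactness of the segment $[x,y]$ to extract a uniform $\epsilon$ so that $f-\epsilon\id$ is order-preserving along the segment (the paper states this via the matrix $f'(\cdot)-\epsilon\id$ being nonnegative, which is the same thing your integral formula encodes).

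The $(\Rightarrow)$ direction is where you diverge from the paper. You route this direction through Lemma 4.9 as well, arguing contrapositively that the absence of a self-loop at $i$ forces $\partial f_i/\partial x_i \equiv 0$, and hence $f(x+te_i)_i - f(x)_i = 0$, which kills type K at coordinate $i$. The paper instead argues directly from the definition of $\G(f)$: type K gives $f(\exp(e_i))_i > f(\mathbf{1})_i$, so the map $t \mapsto \log f(\exp(te_i))_i$ is nonconstant; being convex (by multiplicative convexity) and increasing (by monotonicity), it must grow without bound, which is precisely the definition of an arc from $i$ to itself. Both arguments are valid and both ultimately rest on multiplicative convexity, but the paper's $(\Rightarrow)$ is self-contained and avoids a second appeal to the Jacobian-graph lemma, while yours is arguably cleaner in that both implications flow through the same bridge $\G(f'(x)) = \G(f)$. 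Neither approach needs analyticity beyond $C^1$ in the $(\Rightarrow)$ direction (though analyticity is of course used inside the cited Lemma 4.9), so there is no gap in either case.
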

\begin{proof}
($\Rightarrow$) Since $f$ is type K order-preserving, $f(\exp(e_i)) \ge f(\mathbf{1})+\epsilon e_i$ for some $\epsilon > 0$. This means that the function $t \mapsto \log(f(\exp(te_i))_i)$ is not constant.  Therefore since $\log(f(\exp(te_i))_i)$ is a convex function which is increasing and not constant for $t > 0$, it follows that $\lim_{t\rightarrow \infty} f(\exp(te_i))_i = \infty$.

($\Leftarrow$) If there is an arc from every $i \in [n]$ to itself in $\G(f)$, then the entries on the main diagonal of the Jacobian matrix $\G(f'(x))$ are all positive for every $x \in \R^n_{>0}$. Since $f$ is analytic, the entries of $f'(x)$ depend continuously on $x$. 
Suppose $x, y \in \R^n_{>0}$ with $x \le y$. Since the closed line segment connecting $x$ to $y$ is a compact set, we can choose an $\epsilon > 0$ such that $f'(\lambda y + (1-\lambda) x) - \epsilon \id$ is a nonnegative matrix for every $0 \le \lambda \le 1$.  This implies that $f(x)-\epsilon x \le f(y) - \epsilon y$ or equivalently, $f(y)-f(x) \ge \epsilon (y-x)$ which proves that $f$ is type K order-preserving.
\end{proof}

%

Let $f: \R^n_{>0} \rightarrow \R^n_{>0}$ be order-preserving and homogeneous. Let $C_1, \ldots, C_m$ denote the strongly connected components of $\G(f)$.  We call these components the \emph{classes} of $f$.  A class is \emph{final} if there are no arcs leaving the class in $\G(f)$.  If $C$ is a final class, and $j \in C$, then $f(x)_j$ only depends on the entries $x_i$ of $x$ where $i \in C$ \cite[Lemma 4.5]{Lins22}. For any subset $J \subseteq [n]$, we let 
$$\R^J = \{x \in \R^n : x_j = 0 \text{ for all } j \notin J\}$$
and
$$\R^J_{>0} = \{x \in \R^J : x_j > 0 \text{ for all } j \in J\}.$$
Let $P_J \in \R^{n \times n}$ be the orthogonal projection onto $\R^J$. Since $f$ extends continuously to $\R^n_{\ge 0}$ and the extension is order-preserving and homogeneous \cite[Corollary 4.6]{BuNuSp03}, we can define the following functions for every $J \subseteq [n]$ 
$$f_J := P_J f  P_J.$$
The \emph{upper Collatz-Wielandt number} $r(f_J)$ is defined by 
$$r(f_J) = \inf_{x \in \R^J_{>0}} \max_{j \in J} \frac{f_J(x)_j}{x_j}.$$ 
If $f_J(\R^J_{>0}) \subseteq \R^J_{>0}$, then $r(f_J)$ is equivalent to the cone spectral radius of $f_J$ defined in section 2 \cite[Theorem 5.6.1]{LemmensNussbaum}. In particular, the cone spectral radius $r_{\R^n_{\ge 0}}(f)$ and the upper Collatz-Wielandt number $r(f)$ are the same, so we will use $r(f)$ to denote both. A class $C$ is called \emph{basic} if $r(f_C) = r(f)$. 

If $f$ is multiplicatively convex and no arcs leave $J$ in $\G(f)$, then $f_J(x)_j = f(x)_j$ for all $x \in \R^n_{>0}$ and $j \in J$ \cite[Lemma 4.6]{Lins22}. Put another way,
\begin{equation} \label{decouple}
f_J = P_J f
\end{equation}
on $\R^n_{\ge 0}$ when $f$ is multiplicatively convex and no arcs leave $J$ in $\G(f)$. In particular this is true for the final classes of $f$. 
Gaubert and Gunawardena \cite[Theorem 2]{GaGu04} proved that if $f:\R^n_{>0} \rightarrow \R^n_{>0}$ is order-preserving and homogeneous and $\G(f)$ is strongly connected, then $f$ has an eigenvector in $\R^n_{>0}$.  Therefore $f_C$ has an eigenvector in $\R^C_{>0}$ with eigenvalue $r(f_C)$ for every final class $C$.

Recently, it was shown that any order-preserving, homogeneous, multiplicatively convex $f: \R^n_{>0} \rightarrow \R^n_{>0}$ has an eigenvector in $\R^n_{>0}$ if its basic and final classes are the same; and if $f$ is also analytic, then the converse is true as well, that is, $f$ has an entrywise positive eigenvector if and only if all of its basic classes are final and vice versa \cite[Theorem 4.4]{Lins22}. This necessary and sufficient condition for the existence of eigenvectors in $\R^n_{>0}$ generalizes equivalent results for nonnegative matrices \cite[Theorem 2.3.10]{BermanPlemmons} and nonnegative tensors \cite[Theorem 5]{HuQi16}.


The main result of this section is the following. 

\begin{theorem} \label{thm:anal}
Let $f:\R^n_{>0} \rightarrow \R^n_{>0}$ be order-preserving, homogeneous, analytic, and multiplicatively convex. Let $g(x) = f(x)/\|f(x)\|$ for all $x \in \R^n_{>0}$. If $f$ is type K order-preserving and $f$ has an eigenvector in $\R^n_{>0}$, then there is a $0 < \theta < 1$ such that for every $x \in \R^n_{>0}$, there is an eigenvector $u \in \R^n_{>0}$ (which may depend on $x$) for which 
$$\limsup_{k \rightarrow \infty} \|g^k(x) - u \|^{1/k} \le \theta.$$
\end{theorem}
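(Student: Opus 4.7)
My plan is to first invoke the earlier convergence result of the paper, then linearize at the limit eigenvector in log coordinates, and finally promote a pointwise spectral gap to a uniform one.

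First I would normalize so that $r(f)=1$. Because $\R^n_{\ge 0}$ is a normal cone in finite dimensions and $g$ is Hilbert-metric nonexpansive on $\Sigma\cap\R^n_{>0}$, the orbit $\orbit(x,g)$ lies in a $d_H$-ball around any fixed interior eigenvector $v$, and such a ball is compact in $\Sigma\cap\R^n_{>0}$. Lemma \ref{lem:compactorbits} and the corollary following it (applicable since $f$ is type K) then give that $g^k(x)\to u=u_x\in\R^n_{>0}$ for some eigenvector $u$. By Lemma \ref{lem:technical} it now suffices to find a uniform $\theta\in(0,1)$ with $\limsup_k d_H(f^k(x),u)^{1/k}\le\theta$.

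Next I would pass to log coordinates: let $\Phi:=\log\circ f\circ\exp$. Then $\Phi$ is additively $\mathbf{1}$-homogeneous, order-preserving, convex, and analytic, and its Jacobian $P(\xi):=\Phi'(\xi)$ is entrywise nonnegative with $P(\xi)\mathbf{1}=\mathbf{1}$ and (by Lemma \ref{lem:analTypeK}) positive diagonal. At the fixed point $\xi_*:=\log u$, list the coordinates so that the non-final classes come first and the final classes $C_1,\ldots,C_s$ of $\G(f)$ come last. By the decoupling identity \eqref{decouple},
\[
P(\xi_*) = \begin{pmatrix} P_{NN} & P_{NF} \\ 0 & P_{FF} \end{pmatrix},
\]
with $P_{FF}$ block diagonal across the final classes. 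Each diagonal block $P_{C_kC_k}$ is row-stochastic with strongly connected support graph and positive diagonal, hence primitive, so $1$ is a simple eigenvalue and all others have modulus strictly less than $1$. The block $P_{NN}$ is substochastic with drainage---every non-final vertex reaches some final class in $\G(f)$---so $\rho(P_{NN})<1$. Hence the subdominant modulus
\[
\theta(u) := \max\bigl\{|\lambda|:\lambda\in\on{spec}(P(\xi_*)),\ \lambda\ne 1\bigr\}
\]
is strictly less than $1$. By analyticity, $\Phi(\xi)=\xi_*+P(\xi_*)(\xi-\xi_*)+O(\|\xi-\xi_*\|^2)$; decomposing $\R^n/\spn\mathbf{1}$ into the $1$-eigenspace $T$ of $P(\xi_*)$ (tangent to the eigenvector manifold at $u$) and its spectral complement $T^\perp$ on which $P(\xi_*)$ has spectral radius $\theta(u)$, a standard center-stable manifold argument then shows that orbits of $\Phi$ starting near $\xi_*$ converge to a point on the eigenvector manifold at rate $\theta(u)$ in the variation seminorm (equivalent to $d_H$). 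Since $g^k(x)\to u$, iterates eventually enter such a neighborhood.

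The main obstacle, where I expect the most work, is promoting $\theta(u)<1$ to a uniform bound $\theta:=\sup_u\theta(u)<1$ over all eigenvectors $u\in\R^n_{>0}$. The strategy is to parametrize the eigenvector family by the simplex of final-class scales $(c_1,\ldots,c_s)$: within each final class, the restricted derivative $P_{C_kC_k}(u)$ depends analytically on $u|_{C_k}$ modulo scaling, and multiplicative convexity combined with type K should control its subdominant modulus $\theta_k(u)$ uniformly below $1$ on a compact slice; for the non-final block, a similar analytic compactness argument shows $\sup_u\rho(P_{NN}(u))<1$, with boundary behavior (as some $c_i\to 0$) handled via the structure of $\G(f)$. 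Taking $\theta$ to be the maximum then closes the proof.
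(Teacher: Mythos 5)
You reduce correctly to a Hilbert/Thompson-metric rate via Lemmas~\ref{lem:compactorbits} and~\ref{lem:technical}, and you identify the right block-triangular structure of the linearization at the limit (primitive row-stochastic blocks on the final classes, $\rho(P_{NN})<1$ on the rest). But the step ``a standard center-stable manifold argument then shows\ldots'' is doing a lot of unjustified work, and it is a genuinely different route from the paper's. Two concrete difficulties: (i) $\Phi$ need not be invertible, so the usual (diffeomorphism) center-stable machinery does not apply directly; (ii) the center manifold here is the fixed-point set, fixed pointwise by $\Phi$, so the linearization gives \emph{no} contraction in the center direction, and the decay of the center component of $\xi^k-\xi_*$ is driven only by the quadratic coupling with the stable component. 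Turning that into the claimed geometric rate requires a bootstrap/Gronwall argument, not a direct spectral-gap statement. The paper avoids all of this by an elementary device: it introduces $F(y):=f(P_I y + u_J)$, which freezes the final-class coordinates at their limit; then $F$ is Thompson-nonexpansive with $F'(u)=P_I f'(u)P_I$ of spectral radius $\rho<1$, so $F$ is a genuine local contraction at $u$, while the per-step discrepancy $d_T(F(f^k(x)),f^{k+1}(x))$ is bounded by $d_T(u_J,f_J^k(x))$, which decays at the final-class rate $\theta_J$ already established. Lemma~\ref{lem:helper} then splices these two geometric decays into one R-linear rate. If you want to keep the log-coordinate picture, you should replace the center-stable manifold appeal with an argument of that concrete flavor.

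The second issue is one you flag yourself: uniformity of $\theta$ over $x$ (equivalently over the eigenvectors $u$). Your sketch --- compactness of analytic slices plus control of boundary behavior as some $c_i\to 0$ --- is left as a plan, and that is exactly where the work lies, since the eigenvector family modulo scaling is generally noncompact. Note that the paper gets uniformity of the final-class rate $\theta_J$ essentially for free: on each final class the eigenvector is \emph{unique} up to scaling (primitivity), so $\theta_C$ is an intrinsic constant of $f_C$ and Case~II produces a constant independent of $x$. So if you localize the uniformity question, only the non-final contribution (your $\rho(P_{NN}(u))$, the paper's $\rho(P_I f'(u)P_I)$) remains, and that is the part your proposal does not actually close.
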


Before proving Theorem \ref{thm:anal} we'll need the following lemma. 

\begin{lemma} \label{lem:helper}
Let $(M,d)$ be a metric space, and let $F:M \rightarrow M$ be nonexpansive with respect to $d$. Let $u \in M$ be a fixed point of $F$ and let $(x^m)_{m \in \N}$ be a sequence in $M$ that converges to $u$. If there are constants $\eta, \theta \in (0, 1)$ and $c > 0$ such that 
\begin{equation} \label{helperA}
d(F^k(x),u) \le c \, \eta^k 
\end{equation} 
for all $x$ in a neighborhood $B_R(u)$ of $u$ and all $k \in \N$ and
\begin{equation} \label{helperB}
d(F(x^m),x^{m+1}) \le c \, \theta^m
\end{equation}
for all $m \in \N$, then 
$$\limsup_{k \rightarrow \infty} d(x^k,u) \le \eta^\lambda = \theta^{1-\lambda} < 1 \text{ where }\lambda = \dfrac{\log \theta}{\log \eta + \log \theta}.$$ 
\end{lemma}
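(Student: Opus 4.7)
The plan is to combine hypotheses \eqref{helperA} and \eqref{helperB} with nonexpansiveness and the triangle inequality to bound $d(x^k,u)$ by $\eta^{k-m}$ plus $\theta^m$ for any $m<k$, and then optimize $m$ as a function of $k$. (I read the conclusion as $\limsup_k d(x^k,u)^{1/k}\le \eta^\lambda$, since otherwise the inequality is trivial because $x^m\to u$.)

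First I would compare $x^k$ with $F^{k-m}(x^m)$ along the telescoping chain
$$x^k,\; F(x^{k-1}),\; F^2(x^{k-2}),\;\ldots,\; F^{k-m}(x^m).$$
For each consecutive pair, nonexpansiveness of $F^{k-i-1}$ together with \eqref{helperB} gives
$$d\bigl(F^{k-i-1}(x^{i+1}),\,F^{k-i}(x^i)\bigr)\le d(x^{i+1},F(x^i))\le c\,\theta^{i}.$$
Summing the resulting geometric series from $i=m$ to $k-1$ yields
$$d\bigl(x^k,\,F^{k-m}(x^m)\bigr)\le \frac{c\,\theta^m}{1-\theta}.$$

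Second, since $x^m\to u$, there is some $m_0\in\N$ such that $x^m\in B_R(u)$ for all $m\ge m_0$. For such $m$, hypothesis \eqref{helperA} gives $d(F^{k-m}(x^m),u)\le c\,\eta^{k-m}$. The triangle inequality then yields, for all $m_0\le m<k$,
$$d(x^k,u)\;\le\; c\,\eta^{k-m}+\frac{c\,\theta^m}{1-\theta}.$$

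Finally, I would balance the two terms by choosing $m=m(k):=\lfloor (1-\lambda)k\rfloor$, where $\lambda=\log\theta/(\log\eta+\log\theta)$. A short calculation shows $\eta^{\lambda}=\theta^{1-\lambda}$, so both terms are of order $\eta^{\lambda k}$, giving $d(x^k,u)\le C\,\eta^{\lambda k}$ for all $k$ large enough. Taking $k$-th roots and letting $k\to\infty$ produces the desired $\limsup_{k\to\infty}d(x^k,u)^{1/k}\le \eta^\lambda<1$. The only step requiring any care is verifying that the chosen $m(k)$ eventually exceeds $m_0$ (so that $x^{m(k)}$ lies in $B_R(u)$), which is automatic since $1-\lambda>0$ implies $m(k)\to\infty$; everything else is a routine geometric-series estimate and an optimization that the statement of the lemma has already identified for us.
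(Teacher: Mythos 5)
Your proof is correct and follows essentially the same strategy as the paper's: split $d(x^k,u)$ via the triangle inequality into a nonexpansiveness chain from $x^k$ to $F^{k-m}(x^m)$ plus the contraction estimate for $F^{k-m}(x^m)$ near $u$, then balance the two terms by taking $m \approx (1-\lambda)k$. You also correctly read the conclusion as $\limsup_{k\to\infty} d(x^k,u)^{1/k}\le \eta^\lambda$ (the $1/k$ exponent is evidently a typo in the statement, since the paper's own proof establishes exactly that bound); your only departures from the paper are cosmetic --- a tighter geometric-series bound $c\theta^m/(1-\theta)$ in place of the paper's $ck\theta^m$, and choosing $m(k)=\lfloor(1-\lambda)k\rfloor$ directly rather than optimizing over all $\lambda$ at the end.
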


\begin{proof}
For any $k, m \in \N$,
\begin{align*}
d(x^{k+m},F^k(x^m)) &\le \sum_{i = 0}^{k-1} d(F^{i}(x^{m+k-i}),F^{i+1}(x^{k+m-i-1})) & (\text{triangle ineq.})\\ 
&\le \sum_{i = 0}^{k-1} d(x^{m+k-i},F(x^{k+m-i-1})) & (\text{nonexpansiveness})\\ 
&\le \sum_{i = 0}^{k-1} c \, \theta^{m+k-i-1} \le ck \, \theta^m. & (\text{by } \eqref{helperB})\\ 
\end{align*}
As long as $m$ is large enough so that $x^m \in B_R(u)$, \eqref{helperA} holds so
\begin{align*}
d(x^{k+m},u) &\le d(x^{k+m},F^k(x^m)) + d(F^k(x^m),u) & (\text{triangle ineq.})\\
&\le ck \, \theta^m + c \, \eta^k  \le ck (\theta^m + \eta^k).
\end{align*}
Let $n = k+m$.  Observe that
$$(\theta^m + \eta^k)^{1/n} \le 2 \max \{ \eta^\lambda, \theta^{1-\lambda} \}$$
where $\lambda = k/n$. Therefore
$$d(x^{n},u)^{1/n} \le (2cn)^{1/n} \max \{ \eta^\lambda, \theta^{1-\lambda}\}$$
for every rational $\lambda$ with denominator $n$ and $m = (1-\lambda)n$ large enough so that $x^{m} \in B_R(u)$.  
From this, we conclude that 
$$\limsup_{n \rightarrow \infty} d(x^n,u)^{1/n} \le \max \{ \eta^\lambda, \theta^{1-\lambda} \}$$
for every $0 \le \lambda < 1$.
Since $\eta^\lambda$ is a decreasing function of $\lambda$ while $\theta^{1-\lambda}$ is increasing, the minimum occurs when $\eta^{\lambda} = \theta^{1-\lambda}$.  This happens when $\lambda = \dfrac{\log \theta}{\log \eta + \log \theta}$.
\end{proof}

%

\begin{proof}[Proof of Theorem \ref{thm:anal}]
Since $f$ has an eigenvector in $\R^n_{>0}$, its basic and final classes are the same \cite[Theorem 4.4]{Lins22}.  We separate the proof into three cases based on the final classes of $f$.  \\

\noindent
{Case I.} Suppose that $\G(f)$ is strongly connected. In this case, $f'(x)$ is irreducible for all $x \in \R^n_{>0}$. Since $f$ is type K order-preserving, Lemma \ref{lem:analTypeK} implies that the entries of $f'(x)$ on the main diagonal are all positive.  This means that $f'(x)$ is primitive \cite[Lemma 8.5.5]{HornJohnson}. In this case, it is known that $f$ has a unique eigenvector $u \in \R^n_{>0}$ with $\|u\| = 1$ \cite[Corollary 6.4.7]{LemmensNussbaum}. A proof that $g^k(x)$ converges to $u$ at a linear rate for all $x \in \R^n_{>0}$ can be found in \cite[Corollary 5.2]{FrGaHa13}. Their result is stated for a special class of polynomial maps, but actually applies to any order-preserving, homogeneous map $f:\R^n_{>0} \rightarrow \R^n_{>0}$ with an eigenvector $u$ where the Jacobian matrix $f'(u)$ exists and is primitive.  They show by a standard linearization argument that 
$$\limsup_{k \rightarrow \infty} \|g^k(x) - u\|^{1/k} \le \frac{\rho_2(f'(u))}{\rho(f'(u))}$$ 
where $\rho$ denotes the spectral radius of $f'(u)$ and $\rho_2$ denotes the second largest eigenvalue in absolute value, which must be strictly less than $\rho(f'(u))$ since $f'(u)$ is primitive. \\

\noindent
{Case II.} Suppose that all classes of $\G(f)$ are final and basic. In this case, the map $f$ decouples and the value of $f(x)_i$ only depends on the entries of $x$ in the same class as $i$ by \eqref{decouple}. We may assume without loss of generality that $r(f) = 1$ by replacing $f$ with $r(f)^{-1} f$ if necessary. Let $x \in \R^n_{>0}$. For any final class $C$, the previous case and Lemma \ref{lem:technical} imply there is a $u_C \in \R^C_{>0}$ and $0 < \theta_C < 1$ such that
$$\limsup_{k \rightarrow \infty} \|f_C^k(x) - u_C \|^{1/k} \le \theta_C.$$
Therefore
$$\limsup_{k \rightarrow \infty} \|f^k(x) - u \|^{1/k} \le \max_{C \in \mathcal{C}} \theta_C$$
where $\mathcal{C}$ is the set of all final classes of $f$ and $u = \sum_{C \in \mathcal{C}} u_C$. Note that each $u_C$ might depend on $x$, but the constants $\theta_C$ do not.
Combined with Lemma \ref{lem:technical}, this proves that
$$\limsup_{k \rightarrow \infty} \left\|g^k(x) - \frac{u}{\|u\|}  \right\|^{1/k} \le \max_{C \in \mathcal{C}} \theta_C.$$ \\

\noindent
{Case III.} Suppose that $f$ has classes that are not final. Let $J$ be the union of the final classes of $f$, and let $I = [n] \bs J$. We again assume that $r(f) = 1$. Fix $x \in \R^n_{>0}$. Since $f$ is type K order-preserving, there is a fixed point $u \in \R^n_{>0}$ such that $f^k(x)$ converges to $u$ by Theorem \ref{thm:main}.  

Let $u_I := P_I u$ and $u_J := P_J u$. 
Since there are no arcs in $\G(f)$ that leave $J$, \eqref{decouple} holds. Therefore
$$f_J(u_J) = P_J f P_J(u) = P_J f (u) = P_J u = u_J,$$
so $u_J$ is a fixed point of $f_J$. By the result of the previous case, there is a constant $0 < \theta_J < 1$ which does not depend on $x$ such that 
$$\limsup_{k \rightarrow \infty} \|f_J^k(x) - u_J\|^{1/k} \le \theta_J.$$
Then by Lemma \ref{lem:technical} we also have
\begin{equation} \label{thetaJlim}
\limsup_{k \rightarrow \infty} d_T(f_J^k(x), u_J)^{1/k} \le \theta_J.
\end{equation}
 
Since $u$ is an eigenvector of the homogeneous map $f$ with eigenvalue 1, it is also an eigenvector of $f'(u)$ with the same eigenvalue. Since $f'(u)$ is a nonnegative matrix with an entrywise positive eigenvector, the basic and final classes of $f'(u)$ must be the same \cite[Theorem 2.3.10]{BermanPlemmons}. Also, the spectral radius of $f'(u)$ is $\rho(f'(u)) = 1$. Since $\G(f'(u)) = \G(f)$, the final classes of $f$ and $f'(u)$ are the same.  Therefore, the spectral radius $\rho(P_I f'(u) P_I)$ is strictly less than one.  

Let $F(y) := f(P_I y + u_J)$ for any $y \in \R^n_{>0}$. Observe that $F$ is order-preserving and subhomogeneous, so it is nonexpansive with respect to Thompson's metric.  Also, $F(u)= u$. By \eqref{decouple}, 
$$P_J F(y) = P_J f(P_I y + u_J) = P_J f P_J(P_Iy + u_J) = u_J$$
for all $y \in \R^n_{>0}$.  Since $F(y) = P_I F(y) + P_J F(y)$, it follows that $(P_J F)'(u) = 0$ and
\begin{align*}
F'(u) &= (P_I F)'(u) + (P_J F)'(u) \\
&= (P_I F)'(u) \\
&= P_I f'(u) P_I. & \text{(chain rule)}
\end{align*}
Therefore $\rho:=\rho(F'(u)) < 1$.  By \cite[Lemma 5.6.10]{HornJohnson}, for any $\epsilon > 0$, there is a norm $\|\cdot\|$ on $\R^n$ such that 
$$\|F'(u)(y-u)\| \le (\rho+\epsilon) \|y-u\|$$
for all $y \in \R^n_{>0}$.
Since $F$ is differentiable, for any $\epsilon > 0$, there is a neighborhood around $u$ such that 
$$\|u + F'(u)(y-u) - F(y) \| \le \epsilon \|y-u\|.$$
By choosing $\epsilon > 0$ small enough, we have $\rho+2\epsilon < 1$ and
$$
\|F(y) - u\| \le (\rho+2\epsilon)\|y-u\|
$$
and therefore
$$\|F^k(y) - u\| \le (\rho+2\epsilon)^k\|y-u\|$$
for all $y$ in a sufficiently small neighborhood $B_r(u) = \{y \in \R^n_{>0} : d_T(y,u) \le r\}$ and $k \in \N$.  Then by \eqref{normalThompson}, there is a $c > 0$ such that
\begin{equation} \label{rhoineq}
d_T(F^k(y),u) \le c\, (\rho+2\epsilon)^k d_T(y,u).
\end{equation}

For any $y \in \R^n_{>0}$, observe that 
\begin{align*} 
d_T(F(y),f(y)) &= d_T(f(y_I + u_J), f(y)) \\
&\le d_T(P_I y + u_J, y) & (\text{nonexpansiveness})\\
&= d_T(u_J, P_J y).
\end{align*}
In particular, 
\begin{align*}
d_T(F(f^k(x)),f^{k+1}(x)) &\le d_T(u_J,P_J f^k(x)) \\
&= d_T(u_J, f_J^k(x)) & (\text{by } \eqref{decouple})
\end{align*}
for all $k \in \N$.
Then, by \eqref{thetaJlim}, we must have 
$$d_T(F(f^k(x)),f^{k+1}(x)) \le c (\theta_J+\epsilon)^k$$
for all $k \in \N$ if we replace $c$ by a sufficiently large constant. Now the conditions of Lemma \ref{lem:helper} are satisfied for the metric space $(\R^n_{>0}, d_T)$ with the map $F$, fixed point $u$, sequence $x^m:=f^m(x)$, and constants $\theta = \theta_J+\epsilon$ and $\eta = \rho+2 \epsilon$ (where $\epsilon > 0$ can be made arbitrarily small). Therefore 
$$\limsup_{k \rightarrow \infty} d_T(f^k(x), u )^{1/k} \le \rho^{\lambda} = \theta_J^{1-\lambda} < 1$$
where $\lambda = \frac{\log \theta_J}{\log \rho+ \log \theta_J}$.  The conclusion of the theorem follows from Lemma \ref{lem:technical}. 
\end{proof}

\begin{remark}
If $f:\R^n_{>0} \rightarrow \R^n_{>0}$ is order-preserving and homogeneous, but not type K order-preserving, then in general the iterates of $g(x) = f(x)/\|x\|$ will converge to a periodic orbit \cite[Theorem 8.1.7]{LemmensNussbaum}.  Suppose $f$ is also multiplicatively convex and real analytic and $f$ has an eigenvector in $\R^n_{>0}$.  Let $p$ be the least common multiple of the cycle lengths in $\G(f)$.  Then $\G(f^p)$ has an arc from $i$ to itself for all $i \in [n]$.  Therefore $f^p$ is type K order-preserving by Lemma \ref{lem:analTypeK}. Therefore Theorem \ref{thm:anal} implies that $g^{kp}(x)$ converges to a fixed point of $g^p$ in $\R^n_{>0}$ at a linear rate as $k \rightarrow \infty$ for every $x \in \R^n_{>0}$.  This means that $g^k(x)$ converges to points in a periodic orbit of $g$ at a linear rate.  
\end{remark}

The following two examples show that both analyticity and convexity are necessary to guarantee the linear rate of convergence in Theorem \ref{thm:anal}.

\begin{example}
It is not enough for $f$ to be order-preserving, homogeneous, and analytic to guarantee a linear rate of convergence to fixed points in the interior of a cone.  Consider $T:\R^2 \rightarrow \R^2$ defined by
$$T(x) = \begin{bmatrix}
\tfrac{1}{2}(x_1+x_2)-\arctan(\tfrac{1}{2}(x_2-x_1)) \\
\tfrac{1}{2}(x_1+x_2)+ \arctan(\tfrac{1}{2}(x_2-x_1))
\end{bmatrix}.$$
It is easy to check that the Jacobian derivative of $T$ is always a nonnegative matrix, so $T$ is order-preserving.  Let $f = \exp \circ T \circ \log$.  Then $f:\R^2_{>0} \rightarrow \R^2_{>0}$ is order-preserving, homogeneous, and analytic.
Furthermore, $\mathbf{1}$ is the unique eigenvector of $f$ in $\R^2_{>0}$ up to scaling.  
Let $x = \begin{bmatrix} \exp(-1) \\ \exp(1) \end{bmatrix}$.  Then 
$$
f^k(x) = \begin{bmatrix} \exp(-\arctan^k(1)) \\ \exp(\arctan^k(1)) \end{bmatrix}$$
for all $k \in \N$.  Note that the sequence $\arctan^k(1)$ converges to zero at a rate that is much slower than linear. Therefore $d_T(f^k(x),\mathbf{1})$ also converges to zero at a sublinear rate.  
\end{example}

\begin{example}
It is also not enough for $f$ to be order-preserving, homogeneous, and multiplicatively convex to guarantee a linear rate of convergence to fixed points in the interior of a cone.  Consider $T:\R^2 \rightarrow \R^2$ defined by 
$$T(x) = \begin{bmatrix}
\max(x_1,x_2-\arctan(x_2-x_1)) \\
\max(x_2,x_1+\arctan(x_2-x_1)) 
\end{bmatrix}.$$
You can verify that $T$ is order-preserving by noting that both partial derivatives of $x_2+\arctan(x_1-x_2)$ are always nonnegative.  Note that $T$ is also convex, since 
$$T(x)  = x+ \begin{bmatrix}
\max(0,x_2-x_1-\arctan(x_2-x_1)) \\
\max(0,x_1-x_2+\arctan(x_2-x_1)) 
\end{bmatrix}$$
and the map $t \mapsto \max(0,t - \arctan(t))$ is convex. If we let $f(x) = \exp \circ T \circ \log$, then $f:\R^2_{>0} \rightarrow \R^2_{>0}$ is order-preserving, homogeneous, and multiplicatively convex.  The only eigenvector of $f$ in $\R^n_{>0}$ up to scaling is $\mathbf{1}$.  Let $x = \begin{bmatrix} e^{-1} \\ 1 \end{bmatrix}$.  Then 
$$
f^k(x) = \begin{bmatrix}
\exp(-\arctan^k(1)) \\
1
\end{bmatrix}$$
for all $k \in \N$. Therefore $d_T(f^k(x), \mathbf{1})$ converges to zero, but at a rate much slower than linear. 
\end{example}

\section{Applications}

We conclude with some applications of the previous results.  

\subsection{The Wasserstein distance map}
Huizing, Cantini, and Peyr\'{e} \cite{HuCaPe21} studied a nonlinear eigenproblem related to optimal transport.  They consider the closed cone 
$$\mathcal{D}_n := \{C \in \R^{n \times n}_{\ge 0} : C^T = C \text{ and } C_{ii} = 0 \text{ for } i \in [n]\}$$
of nonnegative, symmetric, $n$-by-$n$ matrices with main diagonal equal to zero.  This cone has nonempty interior in the space of all real symmetric $n$-by-$n$ matrices with main diagonal equal to zero.  

For two nonnegative vectors $a, b \in \R^n_{\ge 0}$ which both have entries that sum to one, the \emph{Wasserstein cost} between $a$ and $b$ is obtained by solving
$$W_C(a,b) := \min_{P \in \Pi_{a,b}} \inner{C,P}$$
where $\inner{C,P} = \sum_{1 \le i,j\le n} C_{ij} P_{ij}$ is the usual inner-product for matrices and 
$$\Pi_{a,b} := \{ P \in \R^{n \times n}_{\ge 0} : P \mathbf{1} = a \text{ and } \mathbf{1}^T P = b \}.$$
Since $\Pi_{a,b}$ is a convex polytope, the minimum of $\inner{C,\cdot}$ is achieved at one of its extreme points.  Therefore the map $C \mapsto W_C(a,b)$ is a piecewise linear map.

For a fixed nonnegative matrix $A \in \R^{n \times n}$ with column sums equal to one, the \emph{Wasserstein distance map} is
$$\Phi_A(C) := W_C(a_i,a_j) + \tau \|C\|_\infty \|a_i - a_j\|_\infty$$
where $a_1, \ldots, a_n$ are the columns of $A$ and $\tau \ge 0$ is a regularization constant. It is easy to verify that $\Phi_A: \mathcal{D}_n \rightarrow \mathcal{D}_n$ is order-preserving and homogeneous.  When $\tau > 0$, the regularization term guarantees that $\Phi_A$ has a unique eigenvector in the interior of $\mathcal{D}_n$ \cite[Theorem 1]{HuCaPe21} (see also \cite[Lemma 6.2.1]{LemmensNussbaum}) and the normalized iterates of $\Phi_A$ will converge to the eigenvector at a linear rate. If we remove the regularization term by letting $\tau = 0$, then $\Phi_A$ may or may not still have an eigenvector in the interior $\inter{\mathcal{D}_n}$.  If it does, however, then note that $\Phi_A + \id$ is type K order-preserving and piecewise affine. Thus Corollaries \ref{cor:mainHomog} and \ref{cor:piecewiseHomog} imply that the iterates of the map $g(C) := \frac{\Phi_A(C)+C}{\|\Phi_A(C)+C\|_\infty}$ will converge at a linear rate to an eigenvector of $\Phi_A$ in $\inter{\mathcal{D}_n}$.  This partially explains an observation made in \cite[Section 3.3]{HuCaPe21} that numerical simulations seemed to converge to an eigenvector of $\Phi_A$ at a linear rate even when $\tau = 0$.  

\subsection{The H-eigenproblem for nonnegative tensors}
An \emph{order-$d$} \emph{tensor of dimension}-$n$ is an array of real numbers $\mathcal{A} = [\![a_{j_1\cdots j_d}]\!] \in \R^{n_1 \times \ldots \times n_d}$ where each $n_i = n$. A tensor is \emph{nonnegative} if each entry $a_{j_1 \cdots j_d}$ is nonnegative.  The H-eigenproblem for nonnegative tensors seeks to find eigenvectors $x \in \R^n_{\ge 0}$ and eigenvalues $\lambda \ge 0$ such that
$$\mathcal{A}x^{(d-1)} = \lambda x^{[d-1]}$$
where $x^{[d-1]} := (x_1^{d-1}, x_2^{d-1}, \ldots, x_n^{d-1})^T$ and 
$$(\mathcal{A}x^{(d-1)})_i := \sum_{1 \le j_2, \ldots, j_d \le n} a_{i  j_2 \cdots j_d} x_{j_2} \cdots x_{j_d}.$$
Of particular interest are eigenvectors with all positive entries.  In \cite[Theorem 5]{HuQi16}, Hu and Qi gave necessary and sufficient conditions for an order-$d$ nonnegative tensor of dimension-$n$ to have an eigenvector in $\R^n_{>0}$.  They refer to tensors with this property as \emph{strongly nonnegative}.  

For any order-$d$ nonnegative tensor $\mathcal{A}$ of dimension-$n$ let $f_\mathcal{A}: \R^n_{\ge 0} \rightarrow \R^n_{\ge 0}$ be the map 
$$f_\mathcal{A}(x) := (\mathcal{A}x^{(d-1)})^{[1/(d-1)]}.$$ 
Then $f_\mathcal{A}$ is order-preserving, homogeneous, and analytic, and any eigenvector of $f_\mathcal{A}$ is an H-eigenvector of the nonnegative tensor $\mathcal{A}$.  It is not immediately obvious, but the maps $f_\mathcal{A}$ are also multiplicatively convex (see e.g., \cite[Lemma 4.1]{Lins22}).  
If, in addition, there is a positive entry $a_{ij_2\cdots j_d}$ in $\mathcal{A}$ for every $i \in [n]$, then $f_{\mathcal{A}}(\R^n_{>0}) \subseteq \R^n_{>0}$.  Thus $\mathcal{A}$ is strongly nonnegative and $f_\mathcal{A}$ has an eigenvector in $\R^n_{>0}$ if and only if the basic and final classes of $f_\mathcal{A}$ are the same.   

For a strongly nonnegative tensor $\mathcal{A}$, the map $f_\mathcal{A} + \id$ is type K order-preserving. Therefore Theorem \ref{thm:anal} and Corollary \ref{cor:mainHomog} guarantee that the iterates of the map $g(x) = \frac{f_\mathcal{A}(x)+x}{\|f_\mathcal{A}(x)+x\|}$ converge to an H-eigenvector of $\mathcal{A}$ at a linear rate, no matter which starting point $x \in \R^n_{>0}$ is chosen.  This was already known for weakly irreducible nonnegative tensors (that is nonnegative tensors such that the graph $\G(f_\mathcal{A})$ is strongly connected). See \cite[Corollary 5.2]{FrGaHa13} for details.  See also \cite{ZhQi12,ZhQiWu13}.  Of course strongly nonnegative tensors are more general than weakly irreducible nonnegative tensors, and the entrywise positive H-eigenvectors of a strongly nonnegative tensor will not be unique (even after scaling) if the map $f_\mathcal{A}$ has more than one final class \cite[Theorem 7.1]{Lins22}. 

\subsection*{Acknowledgement} The author wishes to thank Roger Nussbaum his encouragement and helpful suggestions.  

\bibliography{DW2}

\begin{thebibliography}{10}

\bibitem{AGLN06}
M.~Akian, S.~Gaubert, B.~Lemmens, and R.~Nussbaum.
\newblock Iteration of order preserving subhomogeneous maps on a cone.
\newblock {\em Math. Proc. Cambridge Philos. Soc.}, 140(1):157--176, 2006.

\bibitem{AkGaNu14}
M.~Akian, S.~Gaubert, and R.~D. Nussbaum.
\newblock Uniqueness of the fixed point of nonexpansive semidifferentiable
  maps.
\newblock {\em Trans. Amer. Math. Soc.}, 368(2):1271--1320, 2016.

\bibitem{AlBoGa21}
X.~Allamigeon, M.~Boyet, and S.~Gaubert.
\newblock Piecewise affine dynamical models of {P}etri nets---application to
  emergency call centers.
\newblock {\em Fund. Inform.}, 183(3-4):169--201, 2021.

\bibitem{BermanPlemmons}
A.~Berman and R.~J. Plemmons.
\newblock {\em Nonnegative matrices in the mathematical sciences}.
\newblock Computer Science and Applied Mathematics. Academic Press [Harcourt
  Brace Jovanovich, Publishers], New York-London, 1979.

\bibitem{BuNuSp03}
A.~Burbanks, R.~D. Nussbaum, and C.~T. Sparrow.
\newblock Extension of order-preserving maps on a cone.
\newblock {\em Proc. Roy. Soc. Edinburgh Sect. A}, 133(1):35--59, 2003.

\bibitem{ChPeZh08}
K.~C. Chang, K.~Pearson, and T.~Zhang.
\newblock Perron-{F}robenius theorem for nonnegative tensors.
\newblock {\em Commun. Math. Sci.}, 6(2):507--520, 2008.

\bibitem{Deimling}
K.~Deimling.
\newblock {\em Nonlinear Functional Analysis}.
\newblock Dover Books on Mathematics. Dover Publications, 2010.

\bibitem{FrHu36}
H.~Freudenthal and W.~Hurewicz.
\newblock Dehnungen, verkürzungen, isometrien.
\newblock {\em Fundamenta Mathematicae}, 26(1):120--122, 1936.

\bibitem{FrGaHa13}
S.~Friedland, S.~Gaubert, and L.~Han.
\newblock Perron-{F}robenius theorem for nonnegative multilinear forms and
  extensions.
\newblock {\em Linear Algebra Appl.}, 438(2):738--749, 2013.

\bibitem{GaGu98}
S.~Gaubert and J.~Gunawardena.
\newblock A non-linear hierarchy for discrete event dynamical systems.
\newblock In {\em Proc. the Fourth Workshop on Discrete Event Systems}, page
  IEE., 1998.

\bibitem{GaGu04}
S.~Gaubert and J.~Gunawardena.
\newblock The {P}erron-{F}robenius theorem for homogeneous, monotone functions.
\newblock {\em Trans. Amer. Math. Soc.}, 356(12):4931--4950, 2004.

\bibitem{GaSt20}
S.~Gaubert and N.~Stott.
\newblock A convergent hierarchy of non-linear eigenproblems to compute the
  joint spectral radius of nonnegative matrices.
\newblock {\em Math. Control Relat. Fields}, 10(3):573--590, 2020.

\bibitem{HeidergottOldservanderWoude}
B.~Heidergott, G.~J. Oldser, and J.~van~der Woude.
\newblock {\em Max plus at work}.
\newblock Princeton Series in Applied Mathematics. Princeton University Press,
  Princeton, NJ, 2006.
\newblock Modeling and analysis of synchronized systems: a course on max-plus
  algebra and its applications.

\bibitem{HornJohnson}
R.~A. Horn and C.~R. Johnson.
\newblock {\em Matrix analysis}.
\newblock Cambridge University Press, Cambridge, 1990.
\newblock Corrected reprint of the 1985 original.

\bibitem{HuHuQi14}
S.~Hu, Z.~Huang, and L.~Qi.
\newblock Strictly nonnegative tensors and nonnegative tensor partition.
\newblock {\em Sci. China Math.}, 57(1):181--195, 2014.

\bibitem{HuQi16}
S.~Hu and L.~Qi.
\newblock A necessary and sufficient condition for existence of a positive
  {P}erron vector.
\newblock {\em SIAM J. Matrix Anal. Appl.}, 37(4):1747--1770, 2016.

\bibitem{HuCaPe21}
G.~Huizing, L.~Cantini, and G.~Peyr{\'{e}}.
\newblock Unsupervised ground metric learning using {W}asserstein eigenvectors,
  2021.
\newblock Preprint, \url{https://arxiv.org/abs/2102.06278}.

\bibitem{Jiang96}
J.~F. Jiang.
\newblock Sublinear discrete-time order-preserving dynamical systems.
\newblock {\em Math. Proc. Cambridge Philos. Soc.}, 119(3):561--574, 1996.

\bibitem{Kohlberg80}
E.~Kohlberg.
\newblock Invariant half-lines of nonexpansive piecewise-linear
  transformations.
\newblock {\em Math. Oper. Res.}, 5(3):366--372, 1980.

\bibitem{LLNW18}
B.~Lemmens, B.~Lins, R.~Nussbaum, and M.~Wortel.
\newblock Denjoy-{W}olff theorems for {H}ilbert's and {T}hompson's metric
  spaces.
\newblock {\em J. Anal. Math.}, 134(2):671--718, 2018.

\bibitem{LemmensNussbaum}
B.~Lemmens and R.~D. Nussbaum.
\newblock {\em Nonlinear {P}erron-{F}robenius theory}, volume 189 of {\em
  Cambridge Tracts in Mathematics}.
\newblock Cambridge University Press, Cambridge, 2012.

\bibitem{Lim05}
L.~Lim.
\newblock Singular values and eigenvalues of tensors: a variational approach.
\newblock In {\em 1st IEEE International Workshop on Computational Advances in
  Multi-Sensor Adaptive Processing, 2005.}, pages 129--132, 2005.

\bibitem{Lins22}
B.~Lins.
\newblock A unified approach to nonlinear {P}erron-{F}robenius theory, 2021.
\newblock Preprint, \url{https://arxiv.org/abs/2111.01219}.

\bibitem{MaNu02}
J.~Mallet-Paret and R.~D. Nussbaum.
\newblock Eigenvalues for a class of homogeneous cone maps arising from
  max-plus operators.
\newblock {\em Discrete Contin. Dyn. Syst.}, 8(3):519--562, 2002.

\bibitem{Nussbaum86}
R.~D. Nussbaum.
\newblock Convexity and log convexity for the spectral radius.
\newblock {\em Linear Algebra and its Applications}, 73:59--122, 1986.

\bibitem{Nussbaum88}
R.~D. Nussbaum.
\newblock Hilbert's projective metric and iterated nonlinear maps.
\newblock {\em Mem. Amer. Math. Soc.}, 75(391):iv+137, 1988.

\bibitem{Nussbaum07}
R.~D. Nussbaum.
\newblock Fixed point theorems and {D}enjoy-{W}olff theorems for {H}ilbert's
  projective metric in infinite dimensions.
\newblock {\em Topol. Methods Nonlinear Anal.}, 29(2):199--249, 2007.

\bibitem{Qi05}
L.~Qi.
\newblock Eigenvalues of a real supersymmetric tensor.
\newblock {\em J. Symbolic Comput.}, 40(6):1302--1324, 2005.

\bibitem{Robinson79}
S.~M. Robinson.
\newblock Some continuity properties of polyhedral multifunctions.
\newblock {\em Math. Programming Stud.}, (14):206--214, 1981.

\bibitem{ScFe79}
P.~J. Schweitzer and A.~Federgruen.
\newblock Geometric convergence of value-iteration in multichain {M}arkov
  decision problems.
\newblock {\em Adv. in Appl. Probab.}, 11(1):188--217, 1979.

\bibitem{ZhQi12}
L.~Zhang and L.~Qi.
\newblock Linear convergence of an algorithm for computing the largest
  eigenvalue of a nonnegative tensor.
\newblock {\em Numer. Linear Algebra Appl.}, 19(5):830--841, 2012.

\bibitem{ZhQiWu13}
G.~Zhou, L.~Qi, and S.~Wu.
\newblock Efficient algorithms for computing the largest eigenvalue of a
  nonnegative tensor.
\newblock {\em Front. Math. China}, 8(1):155--168, 2013.

\bibitem{Zijm82}
W.~H.~M. Zijm.
\newblock {\em Nonnegative matrices in dynamic programming}.
\newblock Mathematisch Centrum, Amsterdam, 1982.
\newblock Dissertation, Technische Hogeschool Eindhoven, Eindhoven, 1982, With
  a Dutch summary.

\end{thebibliography}
\bibliographystyle{plain}

\end{document}